\crefname{equation}{Eq.}{Eqs.}
\Crefname{equation}{Equation}{Equations}
\crefname{figure}{Fig.}{Figs.}
\Crefname{figure}{Figure}{Figures}
\crefname{hypothesis}{Hypothesis}{Hypotheses}
\DeclareMathOperator*{\argmin}{arg\,min}
\newcommand{\qbox}[1]{\quad\hbox{#1}\quad}
\newcommand{\R}{\mathbb{R}}
\title{ Adaptive Accelerated Gradient Method for Smooth Convex Optimization
\thanks{Submitted to the editors DATE.
\funding{This work was partially funded by the China Scholarship Council~202208520010, and also benefited from the support of the FMJH Program Gaspard Monge for optimization and operations research and their interactions with data science.}}}
\author{Zepeng Wang\thanks{Bernoulli Institute for Mathematics, Computer Science and Artificial Intelligence, University of Groningen, Groningen, The Netherlands 
  (\email{zepeng.wang@rug.nl}).}
\and Juan Peypouquet\thanks{Bernoulli Institute for Mathematics, Computer Science and Artificial Intelligence, University of Groningen, Groningen, The Netherlands 
  (\email{j.g.peypouquet@rug.nl}).}
}
\begin{document}
\maketitle

\begin{abstract}
We propose an adaptive accelerated gradient method for solving smooth convex optimization problems. The method incorporates a scheme to determine the step size adaptively, by means of a local estimation of the smoothness constant, which is assumed unknown, without resorting to line search procedures. The sequence generated by this method converges weakly to a minimizer of the objective function, and the function values converge at a fast rate of $\mathcal{O}\left( \frac{1}{k^2} \right)$. Moreover, if the objective function is strongly convex, the function values converge at a linear rate.
\end{abstract}

\begin{keywords}
adaptive step size, accelerated gradient descent, convex optimization
\end{keywords}

\begin{MSCcodes}
90C25, 90C06, 68Q25, 65B99
\end{MSCcodes}

\section{Introduction}
Let $H$ be a real Hilbert space and $f:H\to\R$ be convex and $L$-smooth, for some $L>0$. In this paper, we are interested in the following minimization problem:
\begin{equation}\label{Problem: min_f}
\min_{x\in H} f(x),
\end{equation}
and write $x^*=\argmin(f)$, $f^*=f(x^*)$ hereafter for simplicity. A basic method to solve this problem is gradient descent:
\begin{equation}\label{Algo: GD}
x_{k+1} = x_k - s\nabla f(x_k),
\end{equation} 
where $s>0$ is the step size. If the global smoothness parameter $L$ is known, one can set $s\in\left(0,\frac{2}{L}\right)$ and obtain $f(x_k)-f^*\le \mathcal{O}\left( \frac{1}{k} \right)$ \cite{Nesterov_2004}. But in practice, $L$ is usually unknown and requires some estimation procedures, such as line search \cite{Armijo_1966}, which can be computationally expensive for large scale problems. 

As an alternative, the authors in \cite{Malitsky_2020} leveraged a local estimate of $L$, namely
$$ L_k = \frac{ \| \nabla f(x_k) - \nabla f(x_{k-1}) \| }{ \| x_k - x_{k-1} \| }, $$
and computed the current step size in relation to this estimate, by
$$ s_k = \min\left\{ \sqrt{1+\frac{s_{k-1}}{s_{k-2}}} s_{k-1}, \frac{1}{2L_k} \right\}. $$
The result is an {\it adaptive} variant of \eqref{Algo: GD} that can guarantee a convergence rate of $\mathcal{O}\left( \frac{1}{k} \right)$ for the function values. Since the smoothness estimate satisfies $L_k\le L$, a larger step size than $\frac{1}{L}$ is allowed in principle and can speed up the convergence rate. The interested reader is referred to \cite{Ma_2025_AdaBB,Ghaderi_2025} for further progress (including the convergence of the iterates to an optimal solution) on the adaptive gradient method for smooth functions, and \cite{Malitsky_2024,Latafat_2025,Ma_2025_AdaBB} for the composite case. If the objective function is smooth and {\it strongly convex}, a linear convergence rate was established in \cite{Ma_2025_AdaBB,Ghaderi_2025}.

An inertial variant of \eqref{Algo: GD} is Nesterov's accelerated gradient method \cite{Nesterov_1983}:
\begin{equation}\label{Algo: NAG}
\left\{
\begin{array}{rcl}
y_{k+1} &=& x_k - s\nabla f(x_k),\\[3pt]
x_{k+1} &=& y_{k+1} + \frac{\theta_k-1}{\theta_{k+1}}(y_{k+1}-y_k),
\end{array}
\right.
\end{equation}
where $s>0$ and $(\theta_k)_{k\ge 0}$ is given by
\begin{equation}\label{E: theta_k}
\theta_k = \left\{
\begin{array}{ccl}
1, &\qbox{if}& k=0,\\[3pt]
\frac{1 + \sqrt{1+4\theta_{k-1}}}{2}, &\qbox{if}& k\ge 1.
\end{array}
\right.
\end{equation} 
If the global smoothness parameter $L$ is known, one can set $s\in\left(0,\frac{1}{L}\right]$ and obtain $f(y_{k+1}) - f^* \le \mathcal{O}\left( \frac{1}{k^2} \right)$ \cite{Nesterov_1983}, which improves upon the rate of gradient descent \eqref{Algo: GD}. A linear convergence rate was obtained for strongly convex functions \cite{Shi_2024,Bao_2023,Wang_2025_AVD}. Convergence of the iterates was proved in \cite{Chambolle_2015,Attouch_2018,Ryu_2025,Radu_2025_iterate}.   

Inspired by \cite{Malitsky_2020}, there has been a growing interest to develop an adaptive variant of \eqref{Algo: NAG} which is line-search-free but still preserves a fast convergence rate $\mathcal{O}\left( \frac{1}{k^2} \right)$. The first adaptive accelerated gradient algorithm to achieve this goal was developed by \cite{Lan_2025}, followed by \cite{Ma_2025} and \cite{Borodich_2025}. The method in \cite{Ma_2025}, upon which this work is based, can be rewritten as
\begin{equation}\label{Algo: A-OGM}
\left\{
\begin{array}{rcl}
y_{k+1} &=& x_k - s_k\nabla f(x_k),\\[3pt]
x_{k+1} &=& y_{k+1} + \frac{\theta_{k+2}-1}{\theta_{k+3}}( y_{k+1} - y_k ) + \frac{\theta_{k+2}}{\theta_{k+3}}(\alpha_k-1)(y_{k+1}-x_k),
\end{array}
\right.
\end{equation}
where $(\theta_k)_{k\ge 0}$ is given by \eqref{E: theta_k}, $(s_k)_{k\ge 0}$ is the (adaptive) step size and $(\alpha_k)_{k\ge 0}$ is a suitable positive sequence. If $\alpha_k\equiv 1$, \eqref{Algo: A-OGM} reduces to \eqref{Algo: NAG}. Under these assumptions, one has $f(x_k)-f^*\le\mathcal{O}\left( \frac{1}{k^2} \right)$ and $\min_{i\in\{0,\cdots,k\}} \| \nabla f(x_i) \|^2 \le \mathcal{O}\left( \frac{1}{k^3} \right)$ \cite{Ma_2025}. In this paper, we revisit the behavior of the iterates generated by \eqref{Algo: A-OGM}, with different parameter choices. On the one hand, we show the previously unknown weak convergence of the iterates to a solution of \eqref{Problem: min_f}. On the other (and more importantly!), we prove that linear convergence holds when the algorithm is applied to strongly convex functions.  

The remainder of the paper is organized as follows. In Section \ref{Sec: algorithm}, we present an overview of the adaptive accelerated gradient method. In Section \ref{Sec: estimations}, we introduce an energy sequence, and derive some preliminary estimations. In Section \ref{Sec: convexity}, we provide an alternative proof of the $\mathcal{O}\left( \frac{1}{k^2} \right)$ convergence rate, and use the arguments to prove the convergence of the iterates. In Section \ref{Sec: strong convexity}, we derive a linear convergence rate for the function values under strong convexity. Some conclusions are given in Section \ref{Sec: conclusions}.

\section{Adaptive accelerated gradient method}\label{Sec: algorithm}
In this section, we present an overview of the method and comment on the choices of the parameter sequences.

Consider the Adaptive Accelerated Gradient Method:
\begin{equation}\label{Algo: A-AGM}\tag{AdaAGM}
\left\{
\begin{array}{rcl}
y_{k+1} &=& x_k - s_k\nabla f(x_k),\\[4pt]
x_{k+1} &=& y_{k+1} + \frac{t_k - 1}{t_{k+1}}( y_{k+1} - y_k ) + \frac{(\gamma-1)t_k}{t_{k+1}}( y_{k+1}-x_k ),\\[4pt]
L_{k+1} &=& \frac{ \frac{1}{2}\| \nabla f(x_{k+1}) - \nabla f(x_k) \|^2 }{ \langle \nabla f(x_{k+1}), x_{k+1} - x_k \rangle - \left( f(x_{k+1}) - f(x_k) \right) },\\[4pt]
s_{k+1} &=& T(s_k, L_{k+1}),
\end{array} 
\right.
\end{equation} 
where $\gamma>0$, $(s_k)_{k\ge 0}$ is the step size, and $T(s_k, L_{k+1})$ is an adaptive scheme to determine $s_{k+1}$ based on the previous step size $s_k$ and local smoothness parameter $L_{k+1}$. From \eqref{Algo: A-AGM}, we have  
\begin{equation}\label{E: inertial_iterate}
\begin{aligned}
t_{k+1}(x_{k+1}-y_{k+1}) 
&= (t_k-1)(y_{k+1}-y_k) + (\gamma-1)t_k( y_{k+1}-x_k ) \\
&= t_k(x_k-y_k) + \gamma t_k ( y_{k+1}-x_k ) - (y_{k+1}-y_k).  
\end{aligned}
\end{equation}

\subsection{The sequence \texorpdfstring{$(t_k)$}{tk} of inertial parameters}
The sequence $(t_k)_{k\ge 0}$ is defined by
\begin{equation}\label{E: t_k}
t_k = \left\{
\begin{array}{ccl}
t_0, &\qbox{if}& k=0, \\[3pt]
\frac{m+\sqrt{m^2 + 4t_{k-1}^2}}{2},&\qbox{if}& k\ge 1,
\end{array}
\right.
\end{equation}
where $m\in(0,1]$ and $t_0\ge 1$. It follows that $t_{k+1}^2 = t_k^2 + mt_{k+1}$ and 
$$ \frac{mk}{2}+t_0 \le t_k \le mk + t_0 ,\quad \forall k\ge 0. $$

\begin{remark}
The sequence $(t_k)_{k\ge 0}$ reduces to $(\theta_k)_{k\ge 0}$ in \eqref{E: theta_k} if $t_0=m=1$. As will be shown in Subsection \ref{Subsec: s_k}, letting $t_0\ge 1$ free, and $m\in(0,1)$ are two enabling components for an increasing step size. Also, with $m\in(0,1)$, we still have $t_k^2 = \mathcal{O}(k^2)$. 
\end{remark}

\subsection{The sequence \texorpdfstring{$(L_k)$ of local smoothness estimates}{Lk}}
We define
\begin{equation}\label{E: L_k}
L_{k+1}=\left\{
\begin{array}{ccl}
0, &\text{if}& \nabla f(x_{k+1}) = \nabla f(x_k),\\[4pt]
\frac{ \frac{1}{2}\| \nabla f(x_{k+1}) - \nabla f(x_k) \|^2 }{ \langle \nabla f(x_{k+1}), x_{k+1} - x_k \rangle - \left( f(x_{k+1}) - f(x_k) \right) }, &\text{if}& \nabla f(x_{k+1}) \neq \nabla f(x_k).  
\end{array}  
\right.
\end{equation} 
Observe that $L_{k+1}=0$ occurs only when $\nabla f(x_{k+1}) = \nabla f(x_k)$. Moreover, since $f$ is convex and $L$-smooth, we have
$$ \langle \nabla f(x_{k+1}), x_{k+1} - x_k \rangle - \left( f(x_{k+1}) - f(x_k) \right) \ge \frac{1}{2L}\| \nabla f(x_{k+1}) - \nabla f(x_k) \|^2. $$
This means, on the one hand, that $L_{k+1}\le L$, and, on the other, that $ \nabla f(x_{k+1}) = \nabla f(x_k) $ whenever $\langle \nabla f(x_{k+1}), x_{k+1} - x_k \rangle - \left( f(x_{k+1}) - f(x_k) \right) = 0$. Since no confusion should arise, we follow the convention $\frac{0}{0}=0$ and write \eqref{E: L_k} in a compact form as
\begin{equation}\label{E: L_k_bis}
L_{k+1} = \frac{ \frac{1}{2}\| \nabla f(x_{k+1}) - \nabla f(x_k) \|^2 }{ \langle \nabla f(x_{k+1}), x_{k+1} - x_k \rangle - \left( f(x_{k+1}) - f(x_k) \right) },
\end{equation}
for convenience. In a similar fashion, we set
$$ \frac{\| \nabla f(x_{k+1}) - \nabla f(x_k) \|^2}{L_{k+1}} = 0, $$
whenever $\nabla f(x_{k+1}) = \nabla f(x_k)$.

\subsection{The sequence \texorpdfstring{$(s_k)$ of step sizes}{sk}}\label{Subsec: s_k}

The step size $s_{k+1}$ is determined inductively as follows: pick $s_0>0$, $\omega\in[0,1)$, $\delta\in[0,1)$ and $\beta>0$. Given $s_k$, compute $s_{k+1}$ by:
\begin{equation} \label{E: s_k}
\left\{\begin{aligned}
    \qquad s_{k+1} & = \min\left\{  A_k s_k, B_k s_k, \frac{C_k}{L_{k+1}} \right\},\\
    \text{where}&\\
    A_k & = \frac{t_k^2}{t_{k+1}(t_{k+1}-1)} = \frac{t_{k+1}-m}{t_{k+1}-1}, \\[3pt]
    B_k & = \frac{2}{(1+\beta)\gamma}\left( 1 - \frac{1}{t_{k+1}} \right),\\[3pt]
    C_k & = \frac{1-\omega}{ \frac{(1+\beta)\gamma t_{k+1}}{t_{k+1}-1} + \frac{t_{k+1}(t_{k+1}-1)}{\beta(1-\delta)\gamma t_k^2} } = \frac{1-\omega}{ \frac{2}{B_k} + \frac{1}{ \beta(1-\delta)\gamma A_k } }.
\end{aligned}\right. 
\end{equation}
Some comments are in order:

First, since $m\in(0,1]$, we have
$$ A_k = \frac{t_k^2}{t_{k+1}(t_{k+1}-1)} = \frac{t_{k+1}-m}{t_{k+1}-1} \ge 1, $$
with strict inequality if $m\in(0,1)$. On the other hand,
$$ B_k = \frac{2}{(1+\beta)\gamma}\left( 1 - \frac{1}{t_{k+1}} \right) 
> \frac{2}{(1+\beta)\gamma}\left( 1 - \frac{1}{t_0} \right). $$ 
As a consequence, if the parameters $\beta,\gamma,t_0$ satisfy
\begin{equation} \label{E: B>1}
  \frac{2}{(1+\beta)\gamma}\left( 1 - \frac{1}{t_0} \right) \ge 1,
\end{equation}
then $B_k>1$. This means that an increasing step size $s_{k+1} > s_k$ is possible if $m\in(0,1)$ and \eqref{E: B>1} holds. Note that, for \eqref{E: B>1} to hold, it is necessary that $\gamma\in(0,2)$. 

Finally, since $A_k\ge 1$ and $B_k > \frac{2}{(1+\beta)\gamma}\left( 1 - \frac{1}{t_0} \right)$, we have
$$
\frac{1-\omega}{C_k}
= \frac{2}{B_k} + \frac{1}{ \beta\gamma(1-\delta) A_k } 
\le \frac{ (1+\beta)\gamma t_0}{t_0-1}
    + \frac{1}{\beta\gamma(1-\delta)},  
$$
so that
$$ \frac{C_k}{L_{k+1}} 
\ge \frac{1-\omega}{L_{k+1}\left( \frac{ (1+\beta)\gamma t_0 }{t_0-1} + \frac{1}{\beta\gamma(1-\delta)} \right)}
\ge \frac{1-\omega}{L\left( \frac{ (1+\beta)\gamma t_0 }{t_0-1} + \frac{1}{\beta\gamma(1-\delta)} \right)}. $$ 
This implies that $(s_k)_{k\ge 0}$ is bounded from below by $\frac{q}{L}$, where
\begin{equation} \label{E: kappa}
    q := \frac{1-\omega}{\frac{ (1+\beta)\gamma t_0 }{t_0-1}
    + \frac{1}{\beta\gamma(1-\delta)}},
\end{equation}
as shown in the following:

\begin{proposition}\label{Prop: s_k_lower_bound}
Let $m\in(0,1]$, let \eqref{E: B>1} hold, and let the sequence $(s_k)_{k\ge 0}$ be defined by \eqref{E: s_k}. If
$ s_0 \ge \frac{q}{L}$, then $s_k \ge \frac{q}{L}$ for every $k\ge 0$.
\end{proposition}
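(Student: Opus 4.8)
The plan is to argue by induction on $k$. The base case $k=0$ is exactly the standing hypothesis $s_0 \ge \frac{q}{L}$. For the inductive step, I would assume $s_k \ge \frac{q}{L}$ and show that each of the three quantities appearing in the minimum that defines $s_{k+1}$ in \eqref{E: s_k} is at least $\frac{q}{L}$; the conclusion $s_{k+1} \ge \frac{q}{L}$ then follows immediately.

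For the first two quantities, I would simply invoke the bounds already recorded in the discussion preceding the statement. Since $m\in(0,1]$, one has $A_k = \frac{t_k^2}{t_{k+1}(t_{k+1}-1)} \ge 1$, hence $A_k s_k \ge s_k \ge \frac{q}{L}$ by the induction hypothesis. Since \eqref{E: B>1} is assumed, $B_k > 1$, hence $B_k s_k \ge s_k \ge \frac{q}{L}$ as well. For the third quantity I would distinguish the two cases of \eqref{E: L_k}. If $\nabla f(x_{k+1}) = \nabla f(x_k)$, then $L_{k+1}=0$ and, by the stated convention, $\frac{C_k}{L_{k+1}} = +\infty$ (here one notes $C_k>0$, since $\omega<1$, $\delta<1$, $\beta,\gamma>0$ and $A_k\ge 1$, $B_k>0$), so this term does not constrain the minimum. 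Otherwise $0 < L_{k+1} \le L$, and the chain of inequalities displayed just before the proposition gives
$$ \frac{C_k}{L_{k+1}} \ge \frac{1-\omega}{L\left( \frac{(1+\beta)\gamma t_0}{t_0-1} + \frac{1}{\beta\gamma(1-\delta)} \right)} = \frac{q}{L}, $$
using the definition \eqref{E: kappa} of $q$.

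Combining the three estimates yields $s_{k+1} = \min\left\{ A_k s_k, B_k s_k, \frac{C_k}{L_{k+1}} \right\} \ge \frac{q}{L}$, which closes the induction. I do not anticipate any genuine obstacle: the argument is a one-line induction whose only delicate point is the degenerate case $L_{k+1}=0$, where one must appeal to the stated convention rather than to a literal quotient, and the remaining work is exactly the bookkeeping on $A_k$, $B_k$ and $C_k/L_{k+1}$ already carried out in the running text.
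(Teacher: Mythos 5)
Your proof is correct and is substantively the same as the paper's: both arguments reduce to the facts that $A_k \ge 1$, $B_k \ge 1$, and $C_k/L_{k+1} \ge q/L$, the only difference being that you phrase it as a direct induction while the paper phrases it as a minimal-counterexample contradiction, which is logically equivalent. Your explicit treatment of the degenerate case $L_{k+1}=0$ is a minor extra precaution that the paper leaves implicit.
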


\begin{proof}
We prove the argument by contradiction. Let $s_{K+1}$ be the first term in the sequence $(s_k)_{k\ge 0}$ to satisfy $s_k<\frac{q}{L}$. This means that
$s_{K+1}<\frac{q}{L}\le s_K$. Since $A_K,B_K\ge 1$, we have
$$s_{K+1}=\min\left\{A_{K}s_{K},B_{K}s_{K},\frac{C_{K}}{L_{K+1}}\right\}\ge \min\left\{s_{K},\frac{C_{K}}{L_{K+1}}\right\} \ge \frac{q}{L},$$
which is impossible.
\end{proof}

On the other hand, the sequence $(s_k)_{k\ge 0}$ can grow at most as a power of $k$.

\begin{proposition} \label{Prop: s_k_upper_bound}
Let $m\in(0,1]$. Let $(s_k)_{k\ge 0}$ be defined by \eqref{E: s_k}. For every $k\ge 0$, we have
$$s_k\le \left[s_0e^{\frac{2(1-m)}{m}}\right]k^{\frac{2(1-m)}{m}}.$$
\end{proposition}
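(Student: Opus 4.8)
The plan is to iterate the crude bound $s_{k+1}\le A_k s_k$, which is immediate from the definition of $s_{k+1}$ as a minimum in \eqref{E: s_k}, and then to control the resulting product of the $A_k$. The case $k=0$ is trivial, so assume $k\ge 1$. Unrolling the recursion and using the identity $A_{j-1}=\dfrac{t_j-m}{t_j-1}$ recorded right after \eqref{E: s_k} (itself a consequence of $t_j^2=t_{j-1}^2+mt_j$ from \eqref{E: t_k}), one gets
$$ s_k \;\le\; s_0\prod_{j=1}^{k}A_{j-1} \;=\; s_0\prod_{j=1}^{k}\frac{t_j-m}{t_j-1}. $$

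Next I would take logarithms and apply $\ln(1+x)\le x$. Since $t_j>1$ and $\dfrac{t_j-m}{t_j-1}=1+\dfrac{1-m}{t_j-1}$, this yields
$$ \ln\frac{s_k}{s_0} \;\le\; \sum_{j=1}^{k}\ln\!\left(1+\frac{1-m}{t_j-1}\right) \;\le\; (1-m)\sum_{j=1}^{k}\frac{1}{t_j-1}. $$
Then I would invoke the lower bound $t_j\ge \frac{mj}{2}+t_0\ge \frac{mj}{2}+1$ from Section \ref{Sec: algorithm}, so that $t_j-1\ge \frac{mj}{2}$, and the elementary harmonic-sum estimate $\sum_{j=1}^k \frac1j\le 1+\ln k$, to obtain $\sum_{j=1}^{k}\frac{1}{t_j-1}\le \frac{2}{m}(1+\ln k)$. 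Combining gives $\ln\frac{s_k}{s_0}\le \frac{2(1-m)}{m}+\frac{2(1-m)}{m}\ln k$, and exponentiating produces the claimed inequality.

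The computation is essentially routine, and there is no serious obstacle; the only points needing a little care are the passage from the recursion to the product form — where one must note that $A_k\ge 1$, so that $s_{k+1}\le A_ks_k$ genuinely chains without sign issues — and the harmonic-sum bound. The single underlying ``idea'' is simply that $A_k-1=\dfrac{1-m}{t_{k+1}-1}=\mathcal{O}(1/k)$, so that the telescoped product grows like $e^{\mathcal{O}(\ln k)}=\mathcal{O}\!\left(k^{2(1-m)/m}\right)$, with the constant $e^{2(1-m)/m}$ coming from the leading $+1$ in the bound $\sum_{j=1}^k \frac1j\le 1+\ln k$. (When $m=1$ the exponent vanishes and the statement reduces to $s_k\le s_0$, consistent with $A_k\equiv 1$.)
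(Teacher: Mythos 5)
Your proof is correct and follows essentially the same route as the paper: bound $s_{k+1}\le A_k s_k$, take logarithms, use $\ln(1+x)\le x$ together with $t_{k+1}-1\ge m(k+1)/2$, and bound the harmonic sum by $1+\ln k$. (One small caveat, shared with the paper's own statement: the inequality as written actually \emph{fails} at $k=0$ when $m<1$, since the right-hand side is then $0$; so ``the case $k=0$ is trivial'' is not quite right, and the bound should really be asserted for $k\ge 1$.)
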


\begin{proof}
    By definition, 
    $$s_{k+1} \le A_ks_k = \left(\frac{t_{k+1}-m}{t_{k+1}-1}\right)s_k = \left(1+\frac{1-m}{t_{k+1}-1}\right)s_k$$ 
    for every $k\ge 0$. If $m=1$, the sequence is bounded from above. Otherwise, we write
    $$\ln(s_{k+1})-\ln(s_k)\le \ln\left(1+\frac{1-m}{t_{k+1}-1}\right) \le \frac{1-m}{t_{k+1}-1} \le \frac{2(1-m)}{m(k+1)}.$$
    Summing for $k=0,\dots,K-1$, we obtain
    $$\ln(s_K)-\ln(s_0)\le \frac{2(1-m)}{m}\sum_{k=1}^{K}\frac{1}{k}\le \frac{2(1-m)}{m}\left(1+\int_{1}^{K}\frac{d\zeta}{\zeta}\right)=\frac{2(1-m)}{m}\big(1+\ln(K)\big),$$
    which proves the result.     
\end{proof}

\begin{remark} 
A different bound on the growth of $s_k$ can be obtained using the inequality $s_{k+1}\le B_ks_k$.
\end{remark}

\subsection{The standing assumption on the parameters}

The discussion above motivates the following:

\begin{hypothesis} \label{Hypo: s_k}
Set $m\in(0,1)$, $\omega\in[0,1)$, $\delta\in[0,1)$, $\beta>0$ and $\gamma\in(0,2)$. Set $t_0\ge 1$ and $s_0\ge\frac{q}{L}$, where $q$ is given by \eqref{E: kappa}. Suppose that \eqref{E: B>1} holds. Let the sequences $(t_k)_{k\ge 0}$, $(L_k)_{k\ge 0}$ and $(s_k)_{k\ge 0}$ be given by \eqref{E: t_k}, \eqref{E: L_k_bis} and \eqref{E: s_k}, respectively.
\end{hypothesis}

\begin{remark} \label{Rem: ABC_k}
Under Hypothesis \ref{Hypo: s_k}, $A_k>1$, $B_k>1$ and $\frac{C_k}{L_{k+1}}\ge\frac{q}{L}$.
\end{remark}

\section{Energy estimations}\label{Sec: estimations}

Our convergence analysis centers around the energy sequence $(E_k)_{k\ge 0}$, given by
\begin{equation}\label{E: E_k}
E_k = E_k(x^*) := \frac{1}{2}\| \phi_k \|^2 + \frac{\beta}{2}\gamma^2 t_k^2 s_k^2 \| \nabla f(x_k) \|^2 + \gamma t_k^2 s_k \left( f(x_k) - f^* \right),
\end{equation}
where $\beta>0$, $x^*\in\argmin(f)$ (arbitrary, but fixed) and 
\begin{equation}\label{E: phi_k}
\begin{aligned}
\phi_k :&= t_{k+1}(x_{k+1}-y_{k+1}) + (y_{k+1}-x^*) \\
&= t_k(x_k-y_k) + \gamma t_k ( y_{k+1}-x_k ) + (y_k-x^*) \\
&= (t_k-1)(x_k-y_k) + \gamma t_k (y_{k+1}-x_k) + (x_k-x^*),
\end{aligned}
\end{equation}
in view of \eqref{E: inertial_iterate}.

We have the following:

\begin{lemma}\label{Lem: E_k_diff_bound}
Let $f:H\to\R$ be $\mu$-strongly convex and $L$-smooth. Let $(x_k)_{k\ge 0}$ and $(y_k)_{k\ge 0}$ be generated by \eqref{Algo: A-AGM}, and consider the sequence $(E_k)_{k\ge 0}$ defined by \eqref{E: E_k}. Then, for every $\omega\in[0,1)$, we have
\begin{align*}
E_{k+1}-E_k 
&\le \gamma\left[t_{k+1}(t_{k+1}-1) s_{k+1} - t_k^2 s_k \right] ( f(x_k) - f^* ) \\ 
&\quad + \frac{1+\beta}{2}\gamma^2 t_{k+1}^2 s_{k+1}^2 \| \nabla f(x_{k+1}) \|^2 
       - \frac{\beta}{2}\gamma^2 t_k^2 s_k^2 \| \nabla f(x_k) \|^2 \\
&\quad - \gamma t_{k+1}(t_{k+1}-1) s_{k+1}s_k \langle \nabla f(x_{k+1}), \nabla f(x_k) \rangle\\
&\quad - \frac{(1-\omega)\gamma t_{k+1} (t_{k+1}-1)  s_{k+1}}{2L_{k+1}}\| \nabla f(x_{k+1}) - \nabla f(x_k) \|^2 \\ 
&\quad - \frac{\omega\mu \gamma}{2} t_{k+1}(t_{k+1}-1)s_{k+1}\| x_{k+1}-x_k \|^2 
       - \frac{\mu\gamma}{2}t_{k+1}s_{k+1}\| x_{k+1} - x^* \|^2.
\end{align*}
\end{lemma}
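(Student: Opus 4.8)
The plan is to expand $E_{k+1} - E_k$ using the definition \eqref{E: E_k} and the geometric identities for $\phi_k$ in \eqref{E: phi_k}, and then to inject the two available convexity inequalities for $f$: the "three-point" descent inequality coming from $L$-smoothness and convexity (expressed through $L_{k+1}$ as in \eqref{E: L_k_bis}), and the $\mu$-strong convexity inequality. First I would write $\phi_{k+1} = t_{k+1}(x_{k+1} - y_{k+1}) + (y_{k+1} - x^*)$ and substitute the algorithm's update $y_{k+1} = x_k - s_k \nabla f(x_k)$ together with the inertial identity \eqref{E: inertial_iterate}, so that $\phi_{k+1}$ can be compared to $\phi_k$ through a controlled increment. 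The key algebraic observation to exploit is that, since $y_{k+1} - x_k = -s_k \nabla f(x_k)$ and similarly $y_{k+2} - x_{k+1} = -s_{k+1}\nabla f(x_{k+1})$, the difference $\phi_{k+1} - \phi_k$ should reduce to a combination of $\nabla f(x_k)$, $\nabla f(x_{k+1})$, and the residual $x_{k+1} - x^*$ (or $x_{k+1} - x_k$), with coefficients built from $t_k, t_{k+1}, s_k, s_{k+1}$. Expanding $\tfrac12\|\phi_{k+1}\|^2 - \tfrac12\|\phi_k\|^2 = \langle \phi_{k+1} - \phi_k, \tfrac12(\phi_{k+1}+\phi_k)\rangle$ then produces cross terms $\langle \nabla f(x_{k+1}), \nabla f(x_k)\rangle$, $\|\nabla f(x_k)\|^2$, $\|\nabla f(x_{k+1})\|^2$, and inner products of gradients with $x_{k+1} - x^*$ and $y_{k+1} - x^*$.

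Next I would handle the remaining two pieces of $E_{k+1}-E_k$: the gradient-norm term $\tfrac{\beta}{2}\gamma^2(t_{k+1}^2 s_{k+1}^2 \|\nabla f(x_{k+1})\|^2 - t_k^2 s_k^2 \|\nabla f(x_k)\|^2)$, which is essentially carried through verbatim (it matches the second and part of the first term on the right-hand side of the claim), and the function-value term $\gamma(t_{k+1}^2 s_{k+1}(f(x_{k+1})-f^*) - t_k^2 s_k (f(x_k)-f^*))$. For the latter, the standard trick is to split $f(x_{k+1}) - f^* = (f(x_{k+1}) - f(x_k)) + (f(x_k) - f^*)$, so the $(f(x_k)-f^*)$ contributions regroup into the bracket $[t_{k+1}(t_{k+1}-1)s_{k+1} - t_k^2 s_k]$ (using $t_{k+1}^2 - t_{k+1} = t_{k+1}(t_{k+1}-1)$ and keeping the remaining $t_{k+1}s_{k+1}$ piece to be absorbed). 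The term $\gamma t_{k+1}^2 s_{k+1}(f(x_{k+1}) - f(x_k))$ — or rather the relevant multiple of it — is then dispatched by the $L_{k+1}$-inequality $\langle \nabla f(x_{k+1}), x_{k+1}-x_k\rangle - (f(x_{k+1}) - f(x_k)) = \tfrac{1}{2L_{k+1}}\|\nabla f(x_{k+1}) - \nabla f(x_k)\|^2$, which converts a function-difference into the gradient-difference term $-\tfrac{(1-\omega)\gamma t_{k+1}(t_{k+1}-1)s_{k+1}}{2L_{k+1}}\|\nabla f(x_{k+1}) - \nabla f(x_k)\|^2$ appearing in the statement (the factor $1-\omega$ signals that only a fraction of the available slack is spent here, the rest being reserved). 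Simultaneously, $\mu$-strong convexity contributes $-\tfrac{\mu}{2}\|x_{k+1} - x^*\|^2$-type terms and $-\tfrac{\mu}{2}\|x_{k+1}-x_k\|^2$-type terms when one replaces the inner products $\langle \nabla f(x_{k+1}), x_{k+1} - x^*\rangle$ and $\langle \nabla f(x_{k+1}), x_{k+1} - x_k\rangle$ by their strongly-convex lower bounds $f(x_{k+1}) - f^* + \tfrac{\mu}{2}\|x_{k+1}-x^*\|^2$ and $f(x_{k+1}) - f(x_k) + \tfrac{\mu}{2}\|x_{k+1}-x_k\|^2$; this is where the $\omega\mu$ and $\mu$ terms in the last line come from.

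The main obstacle will be the bookkeeping of the quadratic form in $\nabla f(x_k)$, $\nabla f(x_{k+1})$ produced by $\tfrac12\|\phi_{k+1}\|^2$: one must verify that, after all substitutions, the coefficient of $\|\nabla f(x_{k+1})\|^2$ collapses to exactly $\tfrac{1+\beta}{2}\gamma^2 t_{k+1}^2 s_{k+1}^2$, that of $\langle \nabla f(x_{k+1}), \nabla f(x_k)\rangle$ to $-\gamma t_{k+1}(t_{k+1}-1)s_{k+1}s_k$, and that the $\|\nabla f(x_k)\|^2$ coefficient is consistent with the $-\tfrac{\beta}{2}\gamma^2 t_k^2 s_k^2$ term — in other words, that the "$\gamma^2 t_k^2 s_k^2 \|\nabla f(x_k)\|^2$" generated geometrically cancels against the energy's own gradient term and the leftover from telescoping. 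I expect this to require careful use of the identity $t_{k+1}^2 = t_k^2 + m t_{k+1}$ from \eqref{E: t_k} and patience with signs, but no deep idea beyond completing the square / grouping. Once the quadratic-in-gradients and the $(f(x_k)-f^*)$ bracket are pinned down, the remaining terms match the statement line by line, and the inequality (as opposed to equality) enters only through the two convexity bounds, each of which is a genuine inequality valid for any $\mu \ge 0$ (so the $\mu$-terms are simply dropped harmlessly when $\mu = 0$, recovering the convex case).
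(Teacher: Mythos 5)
Your plan coincides with the paper's own proof: compute $\phi_{k+1}-\phi_k=\gamma t_{k+1}(y_{k+2}-x_{k+1})=-\gamma t_{k+1}s_{k+1}\nabla f(x_{k+1})$ (your midpoint identity for $\tfrac12\|\phi_{k+1}\|^2-\tfrac12\|\phi_k\|^2$ is algebraically identical to the paper's $\langle\phi_{k+1}-\phi_k,\phi_{k+1}\rangle-\tfrac12\|\phi_{k+1}-\phi_k\|^2$), substitute the gradient steps, then lower-bound $\langle\nabla f(x_{k+1}),x_{k+1}-x^*\rangle$ by strong convexity and $\langle\nabla f(x_{k+1}),x_{k+1}-x_k\rangle$ by the $\omega$-weighted convex combination of the $L_{k+1}$-identity \eqref{E: f_bound_3} and the strong-convexity bound \eqref{E: f_bound_2}, exactly as in the paper. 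One small caveat: the recursion $t_{k+1}^2=t_k^2+mt_{k+1}$ is in fact not needed in this lemma (it enters only later, through $A_k$), and the geometric expansion of $\tfrac12\|\phi_{k+1}\|^2-\tfrac12\|\phi_k\|^2$ produces no $\|\nabla f(x_k)\|^2$ term at all, so the cancellation you anticipated against $-\tfrac{\beta}{2}\gamma^2 t_k^2 s_k^2\|\nabla f(x_k)\|^2$ does not arise — that term is simply carried over unchanged from $-E_k$.
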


\begin{proof}
By \eqref{E: E_k}, we have
\begin{equation}\label{E: E_k_diff}
\begin{aligned}
E_{k+1} - E_k
&= \left( \frac{1}{2}\| \phi_{k+1} \|^2 - \frac{1}{2}\| \phi_k \|^2 \right) \\ 
&\quad + \gamma\left[ t_{k+1}^2 s_{k+1} \left( f(x_{k+1})-f^* \right) - t_k^2 s_k \left( f(x_k) - f^* \right) \right] \\
&\quad + \beta\gamma^2\left( \frac{1}{2}t_{k+1}^2 s_{k+1}^2 \| \nabla f(x_{k+1}) \|^2 - \frac{1}{2} t_k^2 s_k^2 \| \nabla f(x_k) \|^2  \right).
\end{aligned}
\end{equation}
Using \eqref{E: phi_k} and \eqref{E: inertial_iterate}, we obtain
\begin{equation}\label{E: phi_diff} 
\begin{aligned}
\phi_{k+1}-\phi_k
&= t_{k+2}(x_{k+2}-y_{k+2}) - t_{k+1}(x_{k+1}-y_{k+1}) + (y_{k+2}-y_{k+1}) \\
&= \gamma t_{k+1}(y_{k+2}-x_{k+1}),
\end{aligned}
\end{equation}
which gives
$$ \| \phi_{k+1}-\phi_k \|^2 = \gamma^2 t_{k+1}^2 \| y_{k+2}-x_{k+1} \|^2, $$
and
\begin{align*}
&\quad \langle \phi_{k+1}-\phi_k, \phi_{k+1} \rangle \\
&= \gamma t_{k+1}\langle y_{k+2}-x_{k+1}, (t_{k+1}-1)(x_{k+1}-y_{k+1}) + \gamma t_{k+1}(y_{k+2}-x_{k+1}) + (x_{k+1}-x^*) \rangle \\
&= \gamma t_{k+1}(t_{k+1}-1) \langle y_{k+2}-x_{k+1}, x_{k+1}-y_{k+1} \rangle  
  + \gamma^2 t_{k+1}^2 \| y_{k+2}-x_{k+1} \|^2 \\ 
&\quad  + \gamma t_{k+1}\langle y_{k+2}-x_{k+1}, x_{k+1} - x^* \rangle. 
\end{align*}
Notice that in the first term,
$$ \langle y_{k+2}-x_{k+1}, x_{k+1}-y_{k+1} \rangle 
= \langle y_{k+2}-x_{k+1}, x_{k+1}-x_k \rangle
 + \langle y_{k+2}-x_{k+1}, x_k-y_{k+1} \rangle, $$
so that
\begin{align*}
\langle \phi_{k+1}-\phi_k, \phi_{k+1} \rangle 
&= \gamma t_{k+1}(t_{k+1}-1) \langle y_{k+2}-x_{k+1}, x_{k+1}-x_k \rangle\\ 
&\quad - \gamma t_{k+1} (t_{k+1}-1) \langle y_{k+2}-x_{k+1}, y_{k+1}-x_k \rangle \\
&\quad + \gamma^2 t_{k+1}^2 \| y_{k+2}-x_{k+1} \|^2 
  + \gamma t_{k+1}\langle y_{k+2}-x_{k+1}, x_{k+1} - x^* \rangle. 
\end{align*}
Since
$$ \frac{1}{2}\| \phi_{k+1} \|^2 - \frac{1}{2}\| \phi_k \|^2 = \langle \phi_{k+1}-\phi_k, \phi_{k+1} \rangle - \frac{1}{2}\| \phi_{k+1} - \phi_k \|^2, $$
we obtain
\begin{equation}\label{E: phi_diff_bound}
\begin{aligned}
\frac{1}{2}\| \phi_{k+1} \|^2 - \frac{1}{2}\| \phi_k \|^2  
&= \gamma t_{k+1}(t_{k+1}-1) \langle y_{k+2}-x_{k+1}, x_{k+1}-x_k \rangle \\  
&\quad - \gamma t_{k+1}(t_{k+1}-1) \langle y_{k+2}-x_{k+1}, y_{k+1}-x_k \rangle \\
&\quad + \frac{1}{2}\gamma^2 t_{k+1}^2 \| y_{k+2}-x_{k+1} \|^2 \\ 
&\quad + \gamma t_{k+1}\langle y_{k+2}-x_{k+1}, x_{k+1} - x^* \rangle \\
&= - \gamma t_{k+1}(t_{k+1}-1)s_{k+1} \langle \nabla f(x_{k+1}), x_{k+1}-x_k \rangle \\
&\quad - \gamma t_{k+1}(t_{k+1}-1)s_{k+1}s_k \langle \nabla f(x_{k+1}), \nabla f(x_k) \rangle \\
&\quad + \frac{1}{2}\gamma^2 t_{k+1}^2 s_{k+1}^2 \| \nabla f(x_{k+1}) \|^2 \\   
&\quad - \gamma t_{k+1}s_{k+1}\langle \nabla f(x_{k+1}), x_{k+1} - x^* \rangle, 
\end{aligned}
\end{equation}
by using $y_{k+2}= x_{k+1} - s_{k+1}\nabla f(x_{k+1})$ and $y_{k+1}=x_k - s_k\nabla f(x_k)$. The fact that $f$ is $\mu$-strongly convex gives
\begin{align}
\label{E: f_bound_1}
\langle \nabla f(x_{k+1}), x_{k+1}-x^* \rangle &\ge f(x_{k+1}) - f^* + \frac{\mu}{2}\| x_{k+1} - x^* \|^2,\\
\label{E: f_bound_2}
\langle \nabla f(x_{k+1}), x_{k+1} - x_k \rangle &\ge f(x_{k+1}) - f(x_k) + \frac{\mu}{2}\| x_{k+1} - x_k \|^2. 
\end{align} 
Keeping in mind that $\omega\in[0,1)$ and
\begin{equation}\label{E: f_bound_3}
\langle \nabla f(x_{k+1}), x_{k+1} - x_k \rangle = f(x_{k+1}) - f(x_k) + \frac{1}{2L_{k+1}}\| \nabla f(x_{k+1}) - \nabla f(x_k) \|^2,
\end{equation}
we combine \eqref{E: f_bound_2} and \eqref{E: f_bound_3} to obtain
\begin{align*}
\langle \nabla f(x_{k+1}), x_{k+1} - x_k \rangle 
&\ge f(x_{k+1}) - f(x_k) + \frac{1-\omega}{2L_{k+1}}\| \nabla f(x_{k+1}) - \nabla f(x_k) \|^2 \\
&\quad + \frac{\omega\mu}{2}\| x_{k+1} - x_k \|^2.
\end{align*}
Using this inequality, together with \eqref{E: f_bound_1}, in \eqref{E: phi_diff_bound} gives
\begin{align*}
&\quad \frac{1}{2}\| \phi_{k+1} \|^2 - \frac{1}{2}\| \phi_k \|^2 \\
&\le - \gamma t_{k+1}(t_{k+1}-1) s_{k+1}( f(x_{k+1}) - f(x_k) ) 
     - \gamma t_{k+1}s_{k+1}( f(x_{k+1}) - f^* ) \\
&\quad + \frac{1}{2}\gamma^2 t_{k+1}^2 s_{k+1}^2 \| \nabla f(x_{k+1}) \|^2 
     - \gamma t_{k+1}(t_{k+1}-1) s_{k+1}s_k \langle \nabla f(x_{k+1}), \nabla f(x_k) \rangle\\
&\quad - \frac{(1-\omega)\gamma t_{k+1} (t_{k+1}-1)  s_{k+1}}{2L_{k+1}}\| \nabla f(x_{k+1}) - \nabla f(x_k) \|^2 \\ 
&\quad - \frac{\omega\mu \gamma}{2} t_{k+1}(t_{k+1}-1)s_{k+1}\| x_{k+1}-x_k \|^2 
       - \frac{\mu\gamma}{2}t_{k+1}s_{k+1}\| x_{k+1} - x^* \|^2.
\end{align*}
Using this inequality in \eqref{E: E_k_diff} gives the desired result.
\end{proof}

We proceed to present a crucial result that will enable our adaptive accelerated gradient method possible.

\begin{lemma}\label{Lem: E_k_diff_bound_SC}
Let $f:H\to\R$ be $\mu$-strongly convex and $L$-smooth. Let $(x_k)_{k\ge 0}$ and $(y_k)_{k\ge 0}$ be generated by \eqref{Algo: A-AGM}, with $\gamma\in(0,2)$, and consider the sequence $(E_k)_{k\ge 0}$ defined by \eqref{E: E_k}. If Hypothesis \ref{Hypo: s_k} holds, then
\begin{align*}
E_{k+1}-E_k
&\le - \frac{\omega\mu\gamma t_{k+1} (t_{k+1}-1)  s_{k+1}}{2}\| x_{k+1}-x_k \|^2 
 - \frac{\mu \gamma t_{k+1}s_{k+1}}{2}\| x_{k+1}-x^* \|^2 \\ 
&\quad - \frac{\beta\delta\gamma^2 t_k^2 s_k^2}{2} \| \nabla f(x_k) \|^2.
\end{align*} 
\end{lemma}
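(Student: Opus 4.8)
The plan is to start from the estimate of Lemma~\ref{Lem: E_k_diff_bound} (applied with the same $\omega\in[0,1)$ fixed in Hypothesis~\ref{Hypo: s_k}) and to absorb its remaining ``bad'' terms one at a time, using the three defining bounds $s_{k+1}\le A_ks_k$, $s_{k+1}\le B_ks_k$ and $s_{k+1}\le C_k/L_{k+1}$ coming from \eqref{E: s_k}. Two of the six terms on the right-hand side of Lemma~\ref{Lem: E_k_diff_bound}, those carrying $\|x_{k+1}-x_k\|^2$ and $\|x_{k+1}-x^*\|^2$, already coincide with the two terms in the claimed bound, so they need no treatment. Since $A_k=\tfrac{t_k^2}{t_{k+1}(t_{k+1}-1)}$, the bound $s_{k+1}\le A_ks_k$ gives $t_{k+1}(t_{k+1}-1)s_{k+1}-t_k^2s_k\le 0$, and because $f(x_k)-f^*\ge 0$ the corresponding term is nonpositive and may simply be dropped. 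Thus everything reduces to showing that the four terms involving $\nabla f(x_{k+1})$ and $\nabla f(x_k)$ are bounded above by $-\tfrac{\beta\delta}{2}\gamma^2t_k^2s_k^2\|\nabla f(x_k)\|^2$.

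For the term carrying $L_{k+1}$, I would use $s_{k+1}\le C_k/L_{k+1}$ in the form $\tfrac{s_{k+1}}{L_{k+1}}\ge\tfrac{s_{k+1}^2}{C_k}$ --- legitimate since the prefactor $\tfrac{(1-\omega)\gamma t_{k+1}(t_{k+1}-1)}{2}$ is nonnegative and the case $\nabla f(x_{k+1})=\nabla f(x_k)$ is trivial under the convention $\tfrac{0}{0}=0$. Then I would substitute the identity $\tfrac{1-\omega}{C_k}=\tfrac{2}{B_k}+\tfrac{1}{\beta(1-\delta)\gamma A_k}$ from \eqref{E: s_k} together with the explicit forms $B_k=\tfrac{2(t_{k+1}-1)}{(1+\beta)\gamma t_{k+1}}$ and $A_k=\tfrac{t_k^2}{t_{k+1}(t_{k+1}-1)}$, which rewrites $\tfrac{(1-\omega)\gamma t_{k+1}(t_{k+1}-1)s_{k+1}^2}{2C_k}$ as $\tfrac12(R+S)$, where $R:=(1+\beta)\gamma^2t_{k+1}^2s_{k+1}^2$ and $S:=\tfrac{t_{k+1}^2(t_{k+1}-1)^2s_{k+1}^2}{\beta(1-\delta)t_k^2}$. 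Introducing also $P:=t_{k+1}(t_{k+1}-1)s_{k+1}s_k$ and $M:=\beta(1-\delta)\gamma^2t_k^2s_k^2$, so that $S=\gamma^2P^2/M$, and expanding $\|\nabla f(x_{k+1})-\nabla f(x_k)\|^2$, proving the claimed bound reduces, after rearranging, to checking that the quadratic expression
\[
S\,\|\nabla f(x_{k+1})\|^2-2(R+S-\gamma P)\,\langle\nabla f(x_{k+1}),\nabla f(x_k)\rangle+(R+S+M)\,\|\nabla f(x_k)\|^2
\]
is nonnegative. By the Cauchy--Schwarz inequality (handling the two signs of $R+S-\gamma P$ separately), it suffices to verify $(R+S-\gamma P)^2\le S(R+S+M)$.

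This last inequality is the point where the calibration of $A_k$, $B_k$, $C_k$ pays off, and the step I expect to require the most care. Expanding, it is equivalent to $(R-\gamma P)^2+S(R-2\gamma P-M)\le 0$. Here the remaining bound $s_{k+1}\le B_ks_k$ enters: it gives $(1+\beta)\gamma t_{k+1}s_{k+1}\le 2(t_{k+1}-1)s_k$, hence $R\le 2\gamma P$, so $r:=R/(\gamma P)\in[0,2]$ (note $P>0$, since $t_{k+1}>1$ for $m\in(0,1)$ and the step sizes are positive by Proposition~\ref{Prop: s_k_lower_bound}). Substituting $R=r\gamma P$ and $S=\gamma^2P^2/M$ and simplifying, the left-hand side factors as $\gamma^2P^2\,(r-2)\bigl(r+\tfrac{\gamma P}{M}\bigr)$, which is $\le 0$ because $r\le 2$ while $r+\tfrac{\gamma P}{M}>0$. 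Tracing the substitutions back then yields the claimed bound. Beyond this factorization, the only things to watch are keeping the $C_k$-substitution bookkeeping consistent and correctly collecting the quadratic form; everything else is routine.
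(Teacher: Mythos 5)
Your proof is correct, and it uses the same starting point and the same three step-size bounds as the paper, but the organization of the final verification is genuinely different and cleaner. The paper keeps $L_{k+1}$ inside the quadratic form $W_k$ and its discriminant $\Delta_k$, uses $s_{k+1}\le B_k s_k$ to drop one piece of $\Delta_k$, and reduces the remaining bracket to the condition $s_{k+1}L_{k+1}\le C_k$ through a somewhat opaque regrouping in powers of $s_kL_{k+1}$. You instead eliminate $L_{k+1}$ at the outset via $\frac{s_{k+1}}{L_{k+1}}\ge\frac{s_{k+1}^2}{C_k}$ and substitute the defining identity $\frac{1-\omega}{C_k}=\frac{2}{B_k}+\frac{1}{\beta(1-\delta)\gamma A_k}$, which turns the residual into an $L_{k+1}$-free quadratic in $\|\nabla f(x_{k+1})\|,\|\nabla f(x_k)\|$ whose discriminant condition $(R+S-\gamma P)^2\le S(R+S+M)$ simplifies, after the substitution $R=r\gamma P$ with $r\in[0,2]$ from the $B_k$-bound and $S=\gamma^2P^2/M$, to the transparent factorization $\gamma^2P^2(r-2)\bigl(r+\tfrac{\gamma P}{M}\bigr)\le 0$. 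What this reorganization buys is a closed-form factorization in place of the paper's ad-hoc bookkeeping, making it visible exactly how each of $A_k,B_k,C_k$ is consumed and why the choice of $C_k$ is tight; the edge case $\nabla f(x_{k+1})=\nabla f(x_k)$ is handled equally easily in both treatments since $R\le 2\gamma P$ already forces $\tfrac{R}{2}-\gamma P-\tfrac{M}{2}\le 0$.
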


\begin{proof}
By Lemma \ref{Lem: E_k_diff_bound}, we have
\begin{align*}
E_{k+1}-E_k
&\le - \frac{\omega\mu\gamma t_{k+1} (t_{k+1}-1)  s_{k+1}}{2}\| x_{k+1}-x_k \|^2 
 - \frac{\mu \gamma t_{k+1}s_{k+1}}{2}\| x_{k+1}-x^* \|^2 \\ 
&\quad - \frac{\beta\delta\gamma^2 t_k^2 s_k^2}{2} \| \nabla f(x_k) \|^2 + R_k,
\end{align*}
where
\begin{align*}
R_k &= \gamma\left[t_{k+1}(t_{k+1}-1) s_{k+1} - t_k^2 s_k \right] ( f(x_k) - f^* ) \\ 
&\quad + \frac{1+\beta}{2}\gamma^2 t_{k+1}^2 s_{k+1}^2  \| \nabla f(x_{k+1}) \|^2  
 - \frac{\beta(1-\delta)}{2}\gamma^2 t_k^2 s_k^2 \| \nabla f(x_k) \|^2 \\ 
&\quad - \gamma t_{k+1}(t_{k+1}-1)s_{k+1}s_k \langle \nabla f(x_{k+1}), \nabla f(x_k) \rangle \\
&\quad - \frac{(1-\omega)\gamma t_{k+1}(t_{k+1}-1) s_{k+1}}{2L_{k+1}}\| \nabla f(x_{k+1}) - \nabla f(x_k) \|^2.
\end{align*}
If we can show $R_k\le 0$, then the proof is done. Since $s_{k+1} \le A_k s_k$, we have
\begin{align*}
R_k &\le \frac{1+\beta}{2}\gamma^2 t_{k+1}^2 s_{k+1}^2  \| \nabla f(x_{k+1}) \|^2  
 - \frac{\beta(1-\delta)}{2}\gamma^2 t_k^2 s_k^2 \| \nabla f(x_k) \|^2 \\ 
&\quad - \gamma t_{k+1}(t_{k+1}-1)s_{k+1}s_k \langle \nabla f(x_{k+1}), \nabla f(x_k) \rangle \\
&\quad - \frac{(1-\omega)\gamma t_{k+1}(t_{k+1}-1) s_{k+1}}{2L_{k+1}}\| \nabla f(x_{k+1}) - \nabla f(x_k) \|^2.
\end{align*}
Suppose first that $\nabla f(x_{k+1}) = \nabla f(x_k)$. Then,
\begin{align*}
R_k 
&\le \gamma t_{k+1}s_{k+1} \left[ \frac{1+\beta}{2} \gamma t_{k+1} s_{k+1} - (t_{k+1}-1)s_k \right] \| \nabla f(x_{k+1}) \|^2 \\
&\quad - \frac{\beta(1-\delta)}{2}\gamma^2 t_k^2 s_k^2 \| \nabla f(x_k) \|^2,
\end{align*}
which implies that $R_k \le - \frac{\beta(1-\delta)}{2}\gamma^2 t_k^2 s_k^2 \| \nabla f(x_k) \|^2 \le 0$ since $s_{k+1} \le B_k s_k$.

If, on the other hand, $\nabla f(x_{k+1}) \ne \nabla f(x_k)$, then
\begin{align*}
R_k
&\le -\frac{(1-\omega)\gamma t_{k+1}(t_{k+1}-1)s_{k+1}}{2L_{k+1}}\left( 1 - \frac{(1+\beta)\gamma t_{k+1} s_{k+1} L_{k+1} }{ (1-\omega)(t_{k+1}-1) }  \right) \| \nabla f(x_{k+1}) \|^2 \\
&\quad -\frac{(1-\omega)\gamma t_{k+1}(t_{k+1}-1)s_{k+1}}{2L_{k+1}}\left( 1 + \frac{\beta(1-\delta)\gamma t_k^2 s_k^2 L_{k+1}}{(1-\omega)t_{k+1}(t_{k+1}-1)s_{k+1}} \right) \| \nabla f(x_k) \|^2 \\
&\quad -\frac{(1-\omega)\gamma t_{k+1}(t_{k+1}-1)s_{k+1}}{L_{k+1}}\left( \frac{s_k L_{k+1}}{1-\omega} - 1 \right)\langle \nabla f(x_{k+1}), \nabla f(x_k) \rangle\\
&= -\frac{(1-\omega)\gamma t_{k+1}(t_{k+1}-1)s_{k+1}}{2L_{k+1}} W_k,
\end{align*}
where
\begin{align*}
W_k &= \left( 1 - \frac{(1+\beta)\gamma t_{k+1} s_{k+1} L_{k+1} }{ (1-\omega)(t_{k+1}-1) }  \right) \| \nabla f(x_{k+1}) \|^2 \\
&\quad + \left( 1 + \frac{\beta(1-\delta)\gamma t_k^2 s_k^2 L_{k+1}}{(1-\omega)t_{k+1}(t_{k+1}-1)s_{k+1}} \right) \| \nabla f(x_k) \|^2 \\ 
&\quad + 2\left( \frac{s_k L_{k+1}}{1-\omega} - 1 \right)\langle \nabla f(x_{k+1}), \nabla f(x_k) \rangle.
\end{align*}
The discriminant of the quadratic form $\frac{1}{2}W_k$ is
\begin{align*}
\Delta_k 
&= \left( \frac{s_k L_{k+1}}{1-\omega} - 1 \right)^2 \\
&\quad  - \left( 1 - \frac{(1+\beta)\gamma t_{k+1} s_{k+1} L_{k+1} }{ (1-\omega)(t_{k+1}-1) }  \right)
   \left( 1 + \frac{\beta(1-\delta)\gamma t_k^2 s_k^2 L_{k+1}}{(1-\omega)t_{k+1}(t_{k+1}-1)s_{k+1}} \right) \\
&= \left[ 1-\left( 1 - \frac{(1+\beta)\gamma t_{k+1} s_{k+1} L_{k+1} }{ (1-\omega)(t_{k+1}-1) }  \right) \frac{\beta(1-\omega)(1-\delta)\gamma t_k^2 }{t_{k+1}(t_{k+1}-1)s_{k+1}L_{k+1} }  \right] \left( \frac{ s_k L_{k+1} }{1-\omega} \right)^2 \\ 
&\quad + \frac{2L_{k+1}}{1-\omega}\left[\frac{(1+\beta)\gamma t_{k+1} }{ 2(t_{k+1}-1) } s_{k+1} - s_k \right] \\
& \le \left[ 1-\left( 1 - \frac{(1+\beta)\gamma t_{k+1} s_{k+1} L_{k+1} }{ (1-\omega)(t_{k+1}-1) }  \right) \frac{\beta(1-\omega)(1-\delta)\gamma t_k^2 }{t_{k+1}(t_{k+1}-1)s_{k+1}L_{k+1} }  \right] \left( \frac{ s_k L_{k+1} }{1-\omega} \right)^2,
\end{align*}
because $s_{k+1}\le B_ks_k$. On the other hand,
\begin{align*}
&\qquad \left( 1 - \frac{(1+\beta)\gamma t_{k+1} s_{k+1} L_{k+1} }{ (1-\omega)(t_{k+1}-1) }  \right) \frac{\beta(1-\omega)(1-\delta)\gamma t_k^2 }{t_{k+1}(t_{k+1}-1)s_{k+1}L_{k+1} } \ge 1 \\
&\Longleftrightarrow \left[ \frac{(1+\beta)\gamma t_{k+1} }{ (1-\omega)(t_{k+1}-1) } + \frac{t_{k+1}(t_{k+1}-1) }{\beta(1-\omega)(1-\delta)\gamma t_k^2 } \right]s_{k+1} L_{k+1} \le 1 \\
&\Longleftrightarrow \frac{s_{k+1} L_{k+1}}{C_k} \le 1,
\end{align*}
which is is true by the definition of $s_{k+1}$. Therefore, we have $\Delta_k\le 0$, which gives $W_k \ge 0$, whence $R_k\le 0$. This completes the proof.
\end{proof}

\section{Convergence analysis I: the convex case}\label{Sec: convexity}
In this section, we establish a fast convergence rate of the function values, and prove the weak convergence of the iterates for \eqref{Algo: A-AGM}, when $f$ is convex.

\subsection{Convergence of the function values}\label{Sub: function_values}

In this subsection, we prove the convergence rate of the function values.

\begin{theorem}\label{Thm: func_values}
Let $f:H\to\R$ be convex and $L$-smooth. Let $(x_k)_{k\ge 0}$ and $(y_k)_{k\ge 0}$ be generated by \eqref{Algo: A-AGM}, with $\gamma\in(0,2)$, and assume Hypothesis \ref{Hypo: s_k} to hold with $\omega=\delta=0$. Given $x_0=y_0$, for every $k\ge 0$, we have
$$ f(x_k) - f^* \le \frac{DL}{t_k^2}, $$ 
where
$$D=\frac{1}{q}\left[ \tfrac{1}{2\gamma}\| x_0 - x^* \|^2 + \tfrac{s_0t_0\big( (1+\beta)\gamma s_0t_0L - 1 \big)}{2L}   \| \nabla f(x_0) \|^2 + s_0t_0(t_0-1) ( f(x_0) - f^* ) \right].$$
In particular, every weak subsequential limit point of $x_k$, as $k\to\infty$, minimizes $f$.
\end{theorem}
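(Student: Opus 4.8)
\emph{Proof sketch / plan.} The plan is to turn the energy inequality of Lemma~\ref{Lem: E_k_diff_bound_SC} into a rate. Since a convex function is $0$-strongly convex, I would apply that lemma with $\mu=0$; together with $\omega=\delta=0$, every term on its right-hand side vanishes, so $(E_k)_{k\ge 0}$ is non-increasing and $E_k\le E_0$ for all $k$. Because $f(x_k)\ge f^*$, each of the three summands in \eqref{E: E_k} is non-negative, hence
$$\gamma\, t_k^2 s_k\big(f(x_k)-f^*\big)\le E_k\le E_0.$$
Proposition~\ref{Prop: s_k_lower_bound} applies under Hypothesis~\ref{Hypo: s_k} and gives $s_k\ge q/L$, so that
$$f(x_k)-f^*\le\frac{E_0 L}{\gamma q\, t_k^2}.$$

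The remaining task is to show $E_0\le\gamma q D$. Using $x_0=y_0$ in the third expression for $\phi_k$ in \eqref{E: phi_k}, together with $y_1=x_0-s_0\nabla f(x_0)$, gives $\phi_0=(x_0-x^*)-\gamma t_0 s_0\nabla f(x_0)$, so
$$\tfrac12\|\phi_0\|^2=\tfrac12\|x_0-x^*\|^2-\gamma t_0 s_0\langle\nabla f(x_0),x_0-x^*\rangle+\tfrac12\gamma^2 t_0^2 s_0^2\|\nabla f(x_0)\|^2.$$
For the cross term I would use the inequality $\langle\nabla f(a),a-b\rangle-\big(f(a)-f(b)\big)\ge\tfrac1{2L}\|\nabla f(a)-\nabla f(b)\|^2$, valid for convex $L$-smooth $f$, with $a=x_0$ and $b=x^*$ (recall $\nabla f(x^*)=0$), i.e.\ $\langle\nabla f(x_0),x_0-x^*\rangle\ge f(x_0)-f^*+\tfrac1{2L}\|\nabla f(x_0)\|^2$. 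Substituting this into $E_0=\tfrac12\|\phi_0\|^2+\tfrac\beta2\gamma^2 t_0^2 s_0^2\|\nabla f(x_0)\|^2+\gamma t_0^2 s_0\big(f(x_0)-f^*\big)$ and collecting the coefficient of $f(x_0)-f^*$ (namely $\gamma t_0 s_0(t_0-1)$) and of $\|\nabla f(x_0)\|^2$ (namely $\tfrac{\gamma t_0 s_0}{2L}\big((1+\beta)\gamma t_0 s_0 L-1\big)$) yields exactly $E_0\le\gamma q D$; combined with the previous display this gives $f(x_k)-f^*\le DL/t_k^2$.

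For the final assertion, since $m\in(0,1)$ we have $t_k\ge \tfrac{mk}{2}+t_0\to\infty$, hence $f(x_k)\to f^*$. As $f$ is convex and continuous it is weakly lower semicontinuous, so any weak subsequential limit $\bar x$ of $(x_k)$ satisfies $f(\bar x)\le\liminf_j f(x_{k_j})=f^*$, i.e.\ $\bar x\in\argmin(f)$.

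I do not anticipate a serious obstacle: the argument is short once Lemmas~\ref{Lem: E_k_diff_bound} and~\ref{Lem: E_k_diff_bound_SC} are in place. The only delicate point is the bookkeeping in the estimate of $E_0$ — in particular, one must use the full $L$-smoothness inequality, rather than plain convexity $\langle\nabla f(x_0),x_0-x^*\rangle\ge f(x_0)-f^*$, on the cross term in order to recover the precise coefficient appearing in $D$. Note that this coefficient of $\|\nabla f(x_0)\|^2$ may well be negative when $s_0$ is small, yet $D\ge E_0/(\gamma q)\ge 0$ always, which is consistent because $E_0\le\gamma q D$ is only an inequality.
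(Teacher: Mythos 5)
Your proof is correct and follows essentially the same route as the paper: apply Lemma~\ref{Lem: E_k_diff_bound_SC} with $\mu=\omega=\delta=0$ to get monotonicity of $E_k$, extract $\gamma t_k^2 s_k(f(x_k)-f^*)\le E_0$, lower-bound $s_k$ via Proposition~\ref{Prop: s_k_lower_bound}, and bound $E_0$ by expanding $\frac12\|\phi_0\|^2$ and using the cocoercivity-type inequality (with $\nabla f(x^*)=0$) to absorb the cross term, which is exactly the computation leading to \eqref{E: E_0_bound}. The concluding remark about weak subsequential limits is likewise identical to the paper's argument.
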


\begin{proof}
Setting $\mu=0$, $\omega=0$ and $\delta=0$ in Lemma \ref{Lem: E_k_diff_bound_SC}, we obtain $E_{k+1} - E_k \le 0$, which gives
$$ f(x_k) - f^* \le \frac{E_k}{\gamma t_k^2 s_k} \le \frac{E_0}{\gamma t_k^2 s_k}. $$
With \eqref{E: phi_k} and $x_0 = y_0$ in mind, we have
\begin{equation}\label{E: E_0_bound}
\begin{aligned}
E_0 
&= \frac{1}{2}\| -\gamma s_0t_0\nabla f(x_0) + (x_0-x^*) \|^2 + \frac{\beta}{2}\gamma^2 t_0^2 s_0^2 \| \nabla f(x_0) \|^2 \\
&\quad + \gamma t_0^2 s_0( f(x_0) - f^* ) \\
&= \frac{1}{2}\| x_0 - x^* \|^2 + \frac{1+\beta}{2}\gamma^2 t_0^2 s_0^2 \| \nabla f(x_0) \|^2 
   + \gamma t_0(t_0-1)s_0( f(x_0) - f^* )\\
&\quad - \gamma s_0t_0 \left( \langle \nabla f(x_0), x_0 - x^* \rangle - ( f(x_0) - f^* ) \right) \\
&\le \frac{1}{2}\| x_0 - x^* \|^2 + \frac{1}{2}\gamma s_0t_0 \left[ (1+\beta)\gamma s_0t_0 - \frac{1}{L} \right] \| \nabla f(x_0) \|^2 \\
&\quad + \gamma t_0(t_0-1)s_0 ( f(x_0) - f^* ).
\end{aligned}
\end{equation}
From Proposition \ref{Prop: s_k_lower_bound}, we obtain $s_k \ge \frac{q}{L}$. As a result,
$$ f(x_k) - f^* \le \frac{E_0}{\gamma t_k^2 s_k} \le \frac{E_0L}{q\gamma t_k^2}\le \frac{DL}{t_k^2}, $$
which allows us to conclude. For the minimizing property, it suffices to observe that $f$ is weakly lower-semicontinuous, and that $t_k\to\infty$ as $k\to\infty$.
\end{proof}

\begin{remark} \label{Rem: bound D}
Since $\|\nabla f(x)\|^2\le 2L(f(x)-f^*)$ and $\|\nabla f(x)\|^2\le L^2\|x-x^*\|^2$, we can bound
\begin{align*}
D & \le \frac{1}{q}\min\left\{\frac{1}{2\gamma}\| x_0 - x^* \|^2 + s_0t_0\left[ t_0\big((1+\beta)\gamma s_0L+1\big) - 2\right]  ( f(x_0) - f^* ),\right. \\
& \qquad \left. \frac{1}{2\gamma}\Big[1 + \gamma s_0t_0L \big( (1+\beta)\gamma s_0t_0L - 1\big)\Big]\| x_0 - x^* \|^2  + s_0t_0(t_0-1) ( f(x_0) - f^* )\right\}.
\end{align*}    
\end{remark}

For the reader's convenience, we give explicit convergence rates for two particular choices of the parameters:

{\bf The case  $\gamma=\frac{1}{2}$, $\beta=1$ and $t_0=2$.} Theorem \ref{Thm: func_values} gives:  

\begin{corollary}\label{Cor: rate_cvx_1}
Let $f:H\to\R$ be convex and $L$-smooth. Let the sequence $(t_k)_{k\ge 0}$ be given by \eqref{E: t_k} with $m\in(0,1)$ and $t_0=2$. Consider the algorithm \eqref{Algo: A-AGM}, where $\gamma = \frac{1}{2}$ and Hypothesis \ref{Hypo: s_k} holds with $\omega=\delta=0$ and $\beta=1$. 
Then, given $s_0\ge \frac{1}{4L}$ and $x_0=y_0$, we have, for every $k\ge 0$,
$$ s_k\ge \frac{1}{4L},\qbox{and} f(x_k) - f^* \le \frac{4DL}{t_k^2}, $$
with $D= \| x_0 - x^* \|^2 + s_0\left( 2s_0 - \frac{1}{L} \right)\| \nabla f(x_0) \|^2 + 2 s_0 ( f(x_0) - f^* )$.
\end{corollary}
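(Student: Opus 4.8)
The plan is to obtain the corollary as a direct specialization of Theorem \ref{Thm: func_values}, so the only work is to check that the prescribed parameters are admissible under Hypothesis \ref{Hypo: s_k} and then to simplify the constant.

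First I would confirm admissibility. With $\gamma=\frac12\in(0,2)$, $\beta=1$, $t_0=2$, $\omega=\delta=0$, condition \eqref{E: B>1} reads $\frac{2}{(1+\beta)\gamma}\big(1-\frac{1}{t_0}\big)=\frac{2}{1}\cdot\frac12=1\ge 1$, so it holds (with equality, which is still allowed). Next I would evaluate the lower-bound constant $q$ from \eqref{E: kappa}: its denominator equals $\frac{(1+\beta)\gamma t_0}{t_0-1}+\frac{1}{\beta\gamma(1-\delta)}=2+2=4$, hence $q=\frac14$. Thus the assumption $s_0\ge\frac{q}{L}=\frac{1}{4L}$ of Theorem \ref{Thm: func_values} is precisely the hypothesis of the corollary, and Proposition \ref{Prop: s_k_lower_bound} gives $s_k\ge\frac{1}{4L}$ for all $k\ge 0$.

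Then I would invoke Theorem \ref{Thm: func_values}, which yields $f(x_k)-f^*\le\frac{DL}{t_k^2}$ with $D=\frac1q\big[\frac{1}{2\gamma}\|x_0-x^*\|^2+\frac{s_0t_0((1+\beta)\gamma s_0t_0L-1)}{2L}\|\nabla f(x_0)\|^2+s_0t_0(t_0-1)(f(x_0)-f^*)\big]$. Substituting $q=\frac14$, $\gamma=\frac12$, $\beta=1$, $t_0=2$, one has $\frac{1}{2\gamma}=1$, $(1+\beta)\gamma s_0t_0L=2s_0L$, $\frac{s_0t_0(2s_0L-1)}{2L}=s_0\big(2s_0-\frac1L\big)$, and $s_0t_0(t_0-1)=2s_0$, so the constant from the theorem equals $4$ times the quantity $\|x_0-x^*\|^2+s_0(2s_0-\frac1L)\|\nabla f(x_0)\|^2+2s_0(f(x_0)-f^*)$. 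Renaming this quantity as $D$ gives the stated bound $f(x_k)-f^*\le\frac{4DL}{t_k^2}$.

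There is no real obstacle: all the analysis is carried out in Lemma \ref{Lem: E_k_diff_bound_SC} and Theorem \ref{Thm: func_values}, and the corollary is bookkeeping. The only points that need a moment's attention are that \eqref{E: B>1} is satisfied with equality (permitted, since Hypothesis \ref{Hypo: s_k} asks for ``$\ge$''), and the placement of the factor $q=\frac14$ when it is absorbed into, versus pulled out of, the constant $D$.
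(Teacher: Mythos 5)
Your proposal is correct and is exactly the intended route: the corollary is a direct instantiation of Theorem~\ref{Thm: func_values} (together with Proposition~\ref{Prop: s_k_lower_bound}), and your arithmetic for $q=\tfrac14$ and the simplification of the constant $D$ checks out. The observation that \eqref{E: B>1} holds only with equality here, which Hypothesis~\ref{Hypo: s_k} permits, is a worthwhile sanity check.
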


{\bf The case $\gamma=1$, $\beta=\frac{1}{3}$ and $t_0=3$.} Now Theorem \ref{Thm: func_values} becomes:

\begin{corollary}\label{Cor: rate_cvx_2}
Let $f:H\to\R$ be convex and $L$-smooth. Let the sequence $(t_k)_{k\ge 0}$ be given by \eqref{E: t_k} with $m\in(0,1)$ and $t_0=3$. Consider the algorithm \eqref{Algo: A-AGM}, where $\gamma = 1$ and Hypothesis \ref{Hypo: s_k} holds with $\omega=\delta=0$ and $\beta=\frac{1}{3}$. Then, given $s_0\ge \frac{1}{5L}$ and $x_0=y_0$, we have, for every $k\ge 0$,
$$ s_k\ge \frac{1}{5L},\qbox{and} f(x_k) - f^* \le \frac{5DL}{2t_k^2}, $$
with $D= \| x_0 - x^* \|^2 + 3s_0\left( 4s_0 - \frac{1}{L} \right)\| \nabla f(x_0) \|^2 + 12 s_0 ( f(x_0) - f^* )$.
\end{corollary}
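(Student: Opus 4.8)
The plan is to obtain Corollary~\ref{Cor: rate_cvx_2} as a direct specialization of Theorem~\ref{Thm: func_values} and Proposition~\ref{Prop: s_k_lower_bound} to the parameter values $\gamma=1$, $\beta=\tfrac13$, $t_0=3$, $\omega=\delta=0$, with $m\in(0,1)$ arbitrary. The first step is to verify that these values are admissible, i.e.\ that Hypothesis~\ref{Hypo: s_k} applies (and that $x_0=y_0$, which is part of the statement): indeed $\gamma=1\in(0,2)$, $\beta>0$, $\omega,\delta\in[0,1)$, $m\in(0,1)$, $t_0\ge 1$, and the standing condition~\eqref{E: B>1} reduces to $\frac{2}{(1+\beta)\gamma}\left(1-\tfrac1{t_0}\right)=\frac{2}{4/3}\cdot\tfrac23=1\ge 1$, so it holds, with equality. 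Since $(t_k)$ is strictly increasing, $t_{k+1}>t_0$ for all $k\ge 0$, so in fact $B_k>1$ strictly, and Remark~\ref{Rem: ABC_k} and hence Theorem~\ref{Thm: func_values} remain in force.

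The second step is to evaluate the constant $q$ from~\eqref{E: kappa}. With the chosen values, $\frac{(1+\beta)\gamma t_0}{t_0-1}=\frac{(4/3)\cdot 3}{2}=2$ and $\frac{1}{\beta\gamma(1-\delta)}=3$, so $q=\frac{1}{2+3}=\tfrac15$. Hence the requirement $s_0\ge\frac{q}{L}$ of Hypothesis~\ref{Hypo: s_k} is precisely $s_0\ge\frac{1}{5L}$, and Proposition~\ref{Prop: s_k_lower_bound} then yields $s_k\ge\frac{1}{5L}$ for every $k\ge 0$, which is the first assertion of the corollary.

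The third step is to substitute $q=\tfrac15$, $\gamma=1$, $\beta=\tfrac13$, $t_0=3$ into the constant appearing in Theorem~\ref{Thm: func_values}, which I will denote $D_{\mathrm{Thm}}$ to distinguish it from the corollary's $D$, and to simplify. Term by term: $\frac{1}{2\gamma}\|x_0-x^*\|^2=\tfrac12\|x_0-x^*\|^2$; the gradient term $\frac{s_0t_0((1+\beta)\gamma s_0t_0L-1)}{2L}$ becomes $\frac{3s_0(4s_0L-1)}{2L}$; and the function-value term $s_0t_0(t_0-1)(f(x_0)-f^*)$ becomes $6s_0(f(x_0)-f^*)$. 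Multiplying the bracket by $\tfrac1q=5$ and using $s_0(4s_0L-1)/L=s_0(4s_0-1/L)$, one finds $D_{\mathrm{Thm}}\,L=\frac{5L}{2}\,D$ with $D=\|x_0-x^*\|^2+3s_0\left(4s_0-\tfrac1L\right)\|\nabla f(x_0)\|^2+12s_0(f(x_0)-f^*)$. Therefore Theorem~\ref{Thm: func_values} gives $f(x_k)-f^*\le\frac{D_{\mathrm{Thm}}L}{t_k^2}=\frac{5DL}{2t_k^2}$, which is the claimed bound.

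There is no genuine obstacle here: the whole argument is bookkeeping. The only points worth a line of care are that~\eqref{E: B>1} holds with equality, so one should explicitly note that $B_k$ nonetheless stays strictly above $1$; and that the algebraic rescaling of the constant must be kept consistent with the normalization $f(x_k)-f^*\le\frac{5DL}{2t_k^2}$ adopted in the statement, rather than the $\frac{DL}{t_k^2}$ of Theorem~\ref{Thm: func_values}.
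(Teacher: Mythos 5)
Your proof is correct and follows exactly the route the paper intends: Corollary~\ref{Cor: rate_cvx_2} is a direct specialization of Theorem~\ref{Thm: func_values} together with Proposition~\ref{Prop: s_k_lower_bound}, and your computation of $q=\tfrac15$ and the algebraic identification $D_{\mathrm{Thm}}=\tfrac{5}{2}D$ checks out. The extra care you take about~\eqref{E: B>1} holding only with equality, and $B_k>1$ nonetheless being strict because $t_{k+1}>t_0$, is a worthwhile observation, though the paper's discussion preceding~\eqref{E: B>1} already covers it.
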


\begin{remark}
Setting $\gamma=1$, \eqref{Algo: A-AGM} reduces to 
\begin{equation*}
\left\{
\begin{array}{rcl}
y_{k+1} &=& x_k - s_k\nabla f(x_k),\\[4pt]
x_{k+1} &=& y_{k+1} + \frac{t_k - 1}{t_{k+1}}( y_{k+1} - y_k ),
\end{array} 
\right.
\end{equation*} 
which takes the form of Nesterov's method \cite{Nesterov_1983}.
\end{remark}

\subsection{Convergence of the iterates}\label{Sub: iterates}

In this subsection, we prove the weak convergence of the iterates generated by \eqref{Algo: A-AGM}, following \cite{Ryu_2025}. 

\begin{lemma}\label{Lem: bounded_sequence}
Let $f:H\to\R$ be convex and $L$-smooth. Let $(x_k)_{k\ge 0}$ and $(y_k)_{k\ge 0}$ be generated by \eqref{Algo: A-AGM}, with $\gamma\in(0,2)$. If Hypothesis \ref{Hypo: s_k} holds, then
$(x_k)_{k\ge 0}$ and $(y_k)_{k\ge 0}$ are bounded.
\end{lemma}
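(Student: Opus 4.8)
The natural approach is to exploit the energy functional $E_k(x^*)$ from \eqref{E: E_k} together with the decrease estimate of Lemma \ref{Lem: E_k_diff_bound_SC}. Setting $\mu=0$ there (we are in the merely convex case), we obtain $E_{k+1}(x^*) \le E_k(x^*) \le E_0(x^*)$ for all $k$, so the energy is bounded. Since each of the three summands in $E_k(x^*)$ is nonnegative and $\gamma t_k^2 s_k \ge \gamma t_k^2 \cdot \tfrac{q}{L}$ grows like $k^2$ by \eqref{E: t_k} and Proposition \ref{Prop: s_k_lower_bound}, boundedness of $E_k$ immediately yields $\|\phi_k\|^2 \le 2E_0$, i.e. $(\phi_k)_{k\ge 0}$ is bounded, and also (using the upper bound $s_k \le \mathcal{O}(k^{2(1-m)/m})$ from Proposition \ref{Prop: s_k_upper_bound} to control $t_k^2 s_k^2$) that $t_k s_k \|\nabla f(x_k)\| $ is controlled — though the cleaner route is that $\gamma t_k^2 s_k (f(x_k)-f^*) \le E_0$ gives $f(x_k)-f^* = \mathcal{O}(1/t_k^2)$, which combined with $\|\nabla f(x_k)\|^2 \le 2L(f(x_k)-f^*)$ gives $\|\nabla f(x_k)\| = \mathcal{O}(1/t_k)$, hence $t_k s_k \|\nabla f(x_k)\| = \mathcal{O}(s_k)$, still growing. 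So the key quantity to transfer boundedness to is $\phi_k$.

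Next I would unwind the definition of $\phi_k$ in \eqref{E: phi_k}. From the third expression there,
\[
x_k - x^* = \phi_k - (t_k-1)(x_k-y_k) - \gamma t_k(y_{k+1}-x_k),
\]
and using $x_k - y_k = s_{k-1}\nabla f(x_{k-1})$ (for $k\ge 1$) and $y_{k+1}-x_k = -s_k \nabla f(x_k)$, this becomes
\[
x_k - x^* = \phi_k - (t_k-1)s_{k-1}\nabla f(x_{k-1}) + \gamma t_k s_k \nabla f(x_k).
\]
Now $(t_k-1)s_{k-1}\|\nabla f(x_{k-1})\|$ and $\gamma t_k s_k \|\nabla f(x_k)\|$ must be shown bounded. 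From $E_k \le E_0$ we have $\beta\gamma^2 t_k^2 s_k^2 \|\nabla f(x_k)\|^2 \le 2E_0$, i.e. $t_k s_k \|\nabla f(x_k)\| \le \sqrt{2E_0/\beta}/\gamma$ for every $k$; and since $A_k \ge 1$ gives $s_k \le A_k s_k$ so $s_{k-1} \le s_k \cdot (\text{something})$ — more directly, $s_{k-1}\le s_k/ (\text{lower bound})$ is awkward, so instead I would note $(t_k-1)s_{k-1}\|\nabla f(x_{k-1})\| \le t_{k-1} s_{k-1}\|\nabla f(x_{k-1})\| \cdot \frac{t_k-1}{t_{k-1}}$ and that $(t_k-1)/t_{k-1}$ is bounded (it tends to $1$ since $t_k/t_{k-1}\to 1$). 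Thus both gradient terms are bounded, whence $\|x_k - x^*\| \le \|\phi_k\| + C$ is bounded, giving boundedness of $(x_k)$. Then $y_{k+1} = x_k - s_k\nabla f(x_k)$ and $s_k\|\nabla f(x_k)\| \le \mathcal{O}(s_k/t_k) \to 0$ (or just bounded), so $(y_k)$ is bounded as well.

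The main obstacle I anticipate is the bookkeeping around the step size: the gradient terms in $x_k - x^*$ carry factors $t_k s_k$ and $t_k s_{k-1}$, and a priori $s_k$ is only known to be bounded below by $q/L$ and above by a growing power of $k$ — so one cannot simply say these products are small. The resolution is that the middle term of $E_k$, namely $\tfrac{\beta}{2}\gamma^2 t_k^2 s_k^2\|\nabla f(x_k)\|^2$, is \emph{exactly} the square of $t_k s_k \|\nabla f(x_k)\|$ up to constants, so boundedness of $E_k$ directly bounds this product regardless of how $s_k$ behaves; and the ratio $(t_k-1)/t_{k-1} \le 1 + \mathcal{O}(1/t_{k-1})$ handles the shift from index $k$ to $k-1$ in the $s_{k-1}$ term. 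Once that is seen, the rest is routine. I would also double-check the edge case $k=0$ separately using $x_0 = y_0$ if that hypothesis is needed, though boundedness of a single term is immaterial.
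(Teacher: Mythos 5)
The identity you rely on, $x_k - y_k = s_{k-1}\nabla f(x_{k-1})$, is false. From \eqref{Algo: A-AGM}, the gradient step gives $y_k = x_{k-1} - s_{k-1}\nabla f(x_{k-1})$, hence $x_{k-1} - y_k = s_{k-1}\nabla f(x_{k-1})$, so that
\[
x_k - y_k \;=\; (x_k - x_{k-1}) + s_{k-1}\nabla f(x_{k-1}),
\]
and the extra term $(x_k - x_{k-1})$ is precisely the momentum displacement you cannot discard. Equivalently, using \eqref{E: inertial_iterate} with the index shifted, $x_k - y_k$ is a weighted combination of $y_k - y_{k-1}$ and $y_k - x_{k-1}$, not a single gradient step. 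With the correct identity your decomposition of $x_k - x^*$ would carry a term $(t_k-1)(x_k - x_{k-1})$, whose coefficient grows linearly and is not controlled by the energy, so the argument does not close as you sketch.

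There is a more structural reason the approach must fail: set $z_{k+1} = t_{k+1}(x_{k+1}-y_{k+1}) + y_{k+1}$, so that $\phi_k = z_{k+1}-x^*$ by \eqref{E: phi_k}. Then the very identity you invoke, $(t_k-1)(x_k-y_k) = \phi_{k-1} - (x_k - x^*)$, shows your decomposition
\[
x_k - x^* = \phi_k - (t_k-1)(x_k-y_k) + \gamma t_k s_k\nabla f(x_k)
\]
reduces to $\phi_k - \phi_{k-1} = -\gamma t_k s_k \nabla f(x_k)$, i.e.\ the tautology \eqref{E: phi_diff}; it does not isolate $x_k - x^*$ in terms of bounded quantities. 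The paper avoids this circularity entirely: it shows boundedness of $z_k$ directly from $\|\phi_{k-1}\|^2 \le 2E_0$, then uses $z_{k+1}-z_k = \gamma t_k(y_{k+1}-x_k)$ and $z_k = t_k(x_k-y_k)+y_k$ to get the contraction-type recursion $\|y_{k+1}\| \le (1-\tfrac{1}{t_k})\|y_k\| + \tfrac{M}{t_k}(1+\tfrac{2}{\gamma})$, whose max-invariant gives boundedness of $(y_k)$, and hence of $(x_k)$. That contraction step is the missing idea; the parts of your proposal about bounding $t_k s_k\|\nabla f(x_k)\|$ from the middle term of $E_k$ are correct but not sufficient on their own.
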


\begin{proof}
Recall that the sequence $(E_k)_{k\ge 0}$ is given by \eqref{E: E_k}, where $x^*$ is an arbitrary minimizer of $f$. By Lemma \ref{Lem: E_k_diff_bound_SC}, $E_{k+1}\le E_k$, and $\lim\limits_{k\to\infty}E_k$ exists.
It follows that
$$\|\phi_k\|^2\le 2E_k\le 2E_0.$$
Writing
$$z_{k+1} = t_{k+1}(x_{k+1}-y_{k+1}) + y_{k+1},$$
we obtain 
$$\| z_{k+1}-x^* \|^2 \le 2E_0,$$
so that $z_k$ is bounded, say $\| z_k \| \le M$ for some constant $M>0$. By \eqref{E: phi_diff}, we have
\begin{equation} \label{E: z_k_diff}
    z_{k+1}-z_k= \gamma t_{k}(y_{k+1}-x_{k}),
\end{equation}
and so
$$\gamma t_{k} \| y_{k+1}-x_{k} \|=\| z_{k+1}-z_k \| \le 2M,$$
which gives
$$\|y_{k+1}\|\le\|x_k\|+\frac{2M}{\gamma t_k}.$$
Since $z_k = t_{k}(x_{k}-y_{k}) + y_{k}$, we deduce that
\begin{equation} \label{E: y to x}
    \|x_{k}\| \le \left(1-\frac{1}{t_{k}}\right)\|y_{k}\|+\frac{M}{t_{k}},
\end{equation}
whence
\begin{align*}
    \|y_{k+1}\| & \le \left(1-\frac{1}{t_{k}}\right)\|y_{k}\|+\frac{M}{t_{k}}\left(1+\frac{2}{\gamma}\right) \\
    & \le \max\left\{\|y_k\|,\left(1+\frac{2}{\gamma}\right)M\right\} \\
    & \le \max\left\{\|y_0\|,\left(1+\frac{2}{\gamma}\right)M\right\}.
\end{align*}
It follows that $(y_k)_{k\ge 0}$ is bounded and, by \eqref{E: y to x}, so is $(x_k)_{k\ge 0}$.
\end{proof}

The following result \cite[Lemma A.4]{Radu_2025} will be useful in the sequel:

\begin{lemma}\label{Lem: convergence_limit}
Let $(u_k)_{k\ge 0}$ be a real sequence and $(\zeta_k)_{k\ge 0}$ be positive such that $\sum_{k=0}^{\infty}\frac{1}{\zeta_k}=\infty$. If
$ \lim_{k\to\infty}[ u_{k+1} + \zeta_k( u_{k+1} - u_k ) ] = b\in\R, $
then,
$ \lim_{k\to\infty}u_k = b. $ 
\end{lemma}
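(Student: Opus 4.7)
The plan is to recast the hypothesis as a convex-combination recurrence. Setting $w_k := u_{k+1} + \zeta_k(u_{k+1}-u_k)$ and solving for $u_{k+1}$ yields
\begin{equation*}
u_{k+1} = (1-\alpha_k)\,u_k + \alpha_k\, w_k, \qquad \alpha_k := \frac{1}{1+\zeta_k}\in(0,1),
\end{equation*}
with $w_k\to b$. After replacing $u_k$ by $u_k-b$ and $w_k$ by $w_k-b$ throughout, I may assume $b=0$ and $w_k\to 0$.

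A preliminary step is to show that $\sum_k \alpha_k = +\infty$. I would split the index set according to whether $\zeta_k\le 1$ or $\zeta_k>1$: on the first set $\alpha_k\ge 1/2$, while on the second set $\alpha_k\ge 1/(2\zeta_k)$. If the first set is infinite, the partial sums of $\alpha_k$ restricted to it already diverge; otherwise, for all sufficiently large $k$ one has $\alpha_k\ge 1/(2\zeta_k)$, and the hypothesis $\sum 1/\zeta_k=\infty$ transfers.

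The main step is to unfold the recurrence:
\begin{equation*}
u_{k+1} = u_0\prod_{j=0}^{k}(1-\alpha_j) + \sum_{j=0}^{k} \alpha_j\, w_j \prod_{i=j+1}^{k}(1-\alpha_i).
\end{equation*}
Because $\alpha_j\in(0,1)$ with $\sum \alpha_j=\infty$, the leading product $\prod_{j=0}^{k}(1-\alpha_j)$ tends to $0$. For the second term, fix $\varepsilon>0$ and choose $N$ so that $|w_j|<\varepsilon$ for every $j\ge N$, and split the sum at $N$. The telescoping identity $\alpha_j\prod_{i=j+1}^{k}(1-\alpha_i) = \prod_{i=j+1}^{k}(1-\alpha_i) - \prod_{i=j}^{k}(1-\alpha_i)$ collapses the tail to $1-\prod_{i=N}^{k}(1-\alpha_i)\le 1$, so the tail contributes at most $\varepsilon$. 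The head consists of $N$ fixed summands, and each weight $\prod_{i=j+1}^{k}(1-\alpha_i)$ tends to $0$, so the head vanishes as $k\to\infty$.

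Putting these pieces together yields $\limsup_k |u_{k+1}|\le \varepsilon$ for every $\varepsilon>0$, hence $u_k\to 0$, which in the original variables means $u_k\to b$. The only mild obstacle is the divergence transfer $\sum 1/\zeta_k = \infty \Rightarrow \sum 1/(1+\zeta_k) = \infty$ needed to activate the telescoping bound; once that is in place, the rest is the standard Toeplitz-style averaging argument.
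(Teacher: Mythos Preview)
Your argument is correct: the recasting $u_{k+1}=(1-\alpha_k)u_k+\alpha_k w_k$ with $\alpha_k=\frac{1}{1+\zeta_k}$ is exact, the divergence transfer $\sum 1/\zeta_k=\infty\Rightarrow\sum\alpha_k=\infty$ is handled cleanly by your case split, and the unfolded recurrence combined with the telescoping identity is the standard Toeplitz/Silverman averaging argument.

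There is nothing to compare against here, because the paper does not prove this lemma; it simply quotes it from an external reference (\cite[Lemma A.4]{Radu_2025}). Your write-up therefore serves as a self-contained replacement for that citation.
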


We are now ready to prove the weak convergence of the iterates of \eqref{Algo: A-AGM}.

\begin{theorem}
Let $f:H\to\R$ be convex and $L$-smooth. Let $(x_k)_{k\ge 0}$ and $(y_k)_{k\ge 0}$ be generated by \eqref{Algo: A-AGM}, with $\gamma\in(0,2)$. If Hypothesis \ref{Hypo: s_k} holds, then $x_k$ and $y_k$ converge weakly, as $k\to\infty$, to the same point in $\argmin(f)$.
\end{theorem}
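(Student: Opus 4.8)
The plan is to establish weak convergence via the classical Opial-type argument, which requires two ingredients: (i) for every $x^*\in\argmin(f)$, the limit $\lim_{k\to\infty}\|x_k-x^*\|$ exists, and (ii) every weak subsequential limit point of $(x_k)_{k\ge 0}$ belongs to $\argmin(f)$. Ingredient (ii) is already in hand: Lemma~\ref{Lem: bounded_sequence} gives boundedness, and Theorem~\ref{Thm: func_values} (applied with $\omega=\delta=0$, which is permissible since the conclusion concerns only the iterates and these energy choices merely affect constants) shows every weak limit point minimizes $f$. Once (i) is also established, Opial's lemma yields a unique weak limit point $x_\infty\in\argmin(f)$ for $(x_k)$; and since $\|x_k-y_k\|=\frac{1}{t_k}\|z_k-x_k\|\cdot$ (const)\,$\to 0$ — more precisely $\gamma t_k\|y_{k+1}-x_k\|=\|z_{k+1}-z_k\|$ is bounded so $\|y_{k+1}-x_k\|\to0$, hence $\|y_k-x_k\|\to0$ too by the relation $z_k=t_k(x_k-y_k)+y_k$ with $z_k$ bounded — the sequence $(y_k)$ converges weakly to the same point.

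The heart of the matter is therefore (i). First I would fix $x^*\in\argmin(f)$ and work with the corresponding energy $E_k=E_k(x^*)$, noting $\lim_{k\to\infty}E_k$ exists (Lemma~\ref{Lem: E_k_diff_bound_SC} with $\mu=0$). From $\|\phi_k\|^2\le 2E_k$ and the representation $z_{k+1}=t_{k+1}(x_{k+1}-y_{k+1})+y_{k+1}$, $\phi_k=z_{k+1}-x^*$, we get that $\lim_{k\to\infty}\|z_{k+1}-x^*\|^2$ exists. But I want the limit of $\|x_k-x^*\|^2$, not of $\|z_k-x^*\|^2$. The bridge is Lemma~\ref{Lem: convergence_limit}: from $z_{k+1}-z_k=\gamma t_k(y_{k+1}-x_k)$ and the inertial update, one can express $z_{k+1}-x^*$ as an affine combination involving $x_{k+1}-x^*$, $x_k-x^*$ and gradient terms. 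Expanding $\langle z_{k+1}-x^*,z_{k+1}-z_k\rangle$ and using $z_{k+1}-z_k=\gamma t_k(y_{k+1}-x_k)=-\gamma t_k s_k\nabla f(x_k)$, together with $s_k$ bounded below by $\frac{q}{L}$ and the summability of $t_k^2 s_k\langle\nabla f(x_k),\text{(something)}\rangle$-type quantities extracted from the energy decrease, I would show that the sequence $u_k:=\frac12\|x_k-x^*\|^2$ (or a closely related quantity such as $\frac12\|z_k-x^*\|^2$) satisfies a recursion of the form $u_{k+1}+\zeta_k(u_{k+1}-u_k)\to b$ with $\zeta_k=t_{k+1}-1$, so that $\sum 1/\zeta_k=\infty$ (since $t_k=\mathcal O(k)$). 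Applying Lemma~\ref{Lem: convergence_limit} then gives the existence of $\lim_k\|x_k-x^*\|$.

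The main obstacle I anticipate is producing that clean recursion for $u_k$: the energy $E_k$ controls $\|\phi_k\|^2=\|z_{k+1}-x^*\|^2$ rather than $\|x_k-x^*\|^2$ directly, and passing between the two requires carefully tracking the gradient cross-terms. Concretely, one must show that the "error" terms $\beta\gamma^2 t_k^2 s_k^2\|\nabla f(x_k)\|^2$ and the mixed inner products that appear when relating $\phi_k$ to $x_k-x^*$ are convergent (not merely bounded) — this should follow by summing the inequality in Lemma~\ref{Lem: E_k_diff_bound_SC}, which with $\mu=0$, $\omega,\delta>0$ would give $\sum \gamma^2 t_k^2 s_k^2\|\nabla f(x_k)\|^2<\infty$ and hence, via $t_k^2 s_k^2\ge (q/L)^2 t_k^2\to\infty$, strong decay of $\|\nabla f(x_k)\|$; but since the theorem only assumes Hypothesis~\ref{Hypo: s_k} (allowing $\omega=\delta=0$), I would instead invoke the fast rate $f(x_k)-f^*\le \mathcal O(1/t_k^2)$ from Theorem~\ref{Thm: func_values} and the bound $\|\nabla f(x_k)\|^2\le 2L(f(x_k)-f^*)$ to control these terms. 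I expect the bookkeeping — isolating exactly which combination of $x_k$, $y_k$, $z_k$ obeys the Lemma~\ref{Lem: convergence_limit} hypothesis — to be the only real work; everything else is assembly of results already proved.
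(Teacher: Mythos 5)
Your outline correctly identifies the two ingredients of an Opial-type argument, and ingredient (ii) — every weak subsequential limit point minimizes $f$ — is indeed covered by Lemma~\ref{Lem: bounded_sequence} and Theorem~\ref{Thm: func_values}. But ingredient (i), the existence of $\lim_{k\to\infty}\|x_k-x^*\|$, is not actually established in your proposal, and the step you defer as ``bookkeeping'' is where the real difficulty sits. Concretely, the assertion that $\lim_{k\to\infty}\|z_{k+1}-x^*\|^2$ exists does not follow from $\lim_{k\to\infty}E_k$ existing together with $\|\phi_k\|^2\le 2E_k$: the energy $E_k$ is the \emph{sum} of $\tfrac12\|\phi_k\|^2$ and two further nonnegative terms, $\tfrac{\beta}{2}\gamma^2 t_k^2 s_k^2\|\nabla f(x_k)\|^2$ and $\gamma t_k^2 s_k\left(f(x_k)-f^*\right)$, and under Hypothesis~\ref{Hypo: s_k} with $\omega=\delta=0$ you only know these are \emph{bounded} (the rate $f(x_k)-f^*\le DL/t_k^2$ gives boundedness of the last term, not its convergence). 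Without convergence of those two terms you have no convergent target for $\|\phi_k\|^2$, so the Opial quantity $\|x_k-x^*\|^2$ cannot be reached along the route you sketch, whatever Lemma~\ref{Lem: convergence_limit} manipulations follow.

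The paper avoids this entirely with a different device. Instead of fixing one $x^*$ and trying to prove $\lim\|x_k-x^*\|$ exists, it supposes two weak subsequential limits $z^*$ and $\tilde z^*$ and forms the \emph{difference} of energies $U_k := E_k(z^*) - E_k(\tilde z^*)$. Since the gradient-norm and function-value pieces of $E_k$ do not depend on the reference point, they cancel identically in $U_k$, leaving $U_k = \tfrac12\|z_{k+1}-z^*\|^2 - \tfrac12\|z_{k+1}-\tilde z^*\|^2$, which is \emph{affine} in $z_{k+1}$; and $U_k$ converges automatically because each energy is nonincreasing and bounded below. From there, writing $u_k$ and $\xi_k$ as the analogous affine expressions in $x_{k+1}$ and $y_{k+1}$ and using $z_{k+1}-z_k=\gamma t_k(y_{k+1}-x_k)$ produces a recursion $u_k + (t_{k+1}-1)(u_k-u_{k-1}) = (\text{convergent})$, to which Lemma~\ref{Lem: convergence_limit} applies and forces $z^*=\tilde z^*$. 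This cancellation is the missing idea in your plan: it removes exactly the nuisance terms in $E_k$ whose convergence you cannot obtain, and it replaces the (harder) existence of $\lim\|x_k-x^*\|$ by the weaker uniqueness-of-limit-points statement that suffices here.
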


\begin{proof}
By Theorem \ref{Thm: func_values}, every weak subsequential limit point of $x_k$, as $k\to\infty$, is a minimizer of $f$. Since, by Lemma \ref{Lem: bounded_sequence}, $(x_k)_{k\ge 0}$ is bounded, it suffices to prove that it cannot have more than one such limit point. Suppose, then, that $x_{m_k} \rightharpoonup z^*$ and $x_{n_k}\rightharpoonup \tilde{z}^*$, as $k\to\infty$. Then $z^*$ and $\tilde z^*$ belong to $\argmin(f)$. Now, set
$$ U_k := E_k(z^*) - E_k(\tilde{z}^*), $$
and recall $z_{k+1} = t_{k+1}(x_{k+1}-y_{k+1}) + y_{k+1}$. It follows from Lemma \ref{Lem: E_k_diff_bound_SC} that $\lim_{k\to\infty} U_k$ exists. Using \eqref{E: E_k}, we obtain
\begin{align*}
U_k &= \frac{1}{2}\| z_{k+1} - z^* \|^2 - \frac{1}{2}\| z_{k+1} - \tilde{z}^* \|^2 \\
&= -\langle z_{k+1}, z^* - \tilde{z}^* \rangle + \frac{1}{2}\| z^* \|^2 - \frac{1}{2}\| \tilde{z}^* \|^2 \\
&= -\langle t_{k+1} x_{k+1} - (t_{k+1}-1)y_{k+1}, z^* - \tilde{z}^* \rangle + \frac{1}{2}\| z^* \|^2 - \frac{1}{2}\| \tilde{z}^* \|^2, 
\end{align*}
so that
$$ -2 \langle t_{k+1} x_{k+1} - (t_{k+1}-1)y_{k+1}, z^* - \tilde{z}^* \rangle + \| z^* \|^2 - \| \tilde{z}^* \|^2 = 2U_k. $$
Define
\begin{align*}
u_k &= \| x_{k+1} - z^* \|^2 - \| x_{k+1} - \tilde{z}^* \|^2= -2\langle x_{k+1}, z^* - \tilde{z}^* \rangle + \| z^* \|^2 - \| \tilde{z}^* \|^2,\\
\xi_k &= \| y_{k+1} - z^* \|^2 - \| y_{k+1} - \tilde{z}^* \|^2 = -2\langle y_{k+1}, z^* - \tilde{z}^* \rangle + \| z^* \|^2 - \| \tilde{z}^* \|^2,
\end{align*}
so that
\begin{equation} \label{E: U_k}
    t_{k+1}u_k - (t_{k+1}-1)\xi_k = 2U_k.
\end{equation}
Using \eqref{E: z_k_diff}, we have $z_{k+1}-z_k =  \gamma t_{k}(y_{k+1}-x_{k})$, which gives
\begin{equation}\label{E: y_k_eqn}
y_{k+1}=x_k+\frac{1}{\gamma t_k}(z_{k+1}-z_k).
\end{equation}
With this, we can rewrite $\xi_k$ as
\begin{align*}
\xi_k
&=-2\left\langle x_k + \frac{z_{k+1}-z_k}{\gamma t_k}, z^* - \tilde{z}^* \right\rangle 
  + \| z^* \|^2 - \| \tilde{z}^* \|^2 \\
&= u_{k-1} - \frac{2}{\gamma t_k} \langle z_{k+1}-z_k, z^* - \tilde{z}^* \rangle \\
&= u_{k-1} + \frac{2}{\gamma t_k}( U_k - U_{k-1} ).
\end{align*}
Substituting this equality into \eqref{E: U_k} gives
$$ u_k + (t_{k+1}-1)(u_k-u_{k-1}) = 2U_k + \frac{2(t_{k+1}-1)}{\gamma t_k}( U_k - U_{k-1} ). $$
Since $\frac{t_{k+1}-1}{t_k} < \frac{t_k - (1-m)}{t_k} < 1$, and $\lim_{k\to\infty} U_k$ exists, we deduce that the right-hand side has a limit. Invoking Lemma \ref{Lem: convergence_limit}, we deduce that $\lim_{k\to\infty}u_k$ exists. Replacing $k$ by $m_k$, and then by $n_k$ in the definition of $u_k$, and taking limits, we obtain
$$ -\|z^* - \tilde{z}^* \| = \lim_{k\to\infty}u_k= \| \tilde{z}^* - z^* \|^2, $$
which implies that $z^* = \tilde{z}^*$. It ensues that $x_k$ converges weakly to a minimizer of $f$. From \eqref{E: y_k_eqn}, it easily follows that $y_k$ converges weakly to the same limit.
\end{proof}

\section{Convergence analysis II: the strongly convex case}\label{Sec: strong convexity}
In this section, we establish a linear convergence rate of the function values when $f$ is strongly convex.

\begin{lemma}\label{Lem: E_k_diff_SC}
Let $f:H\to\R$ be $\mu$-strongly convex and $L$-smooth, where $L\ge\mu>0$. Let $(x_k)_{k\ge 0}$ and $(y_k)_{k\ge 0}$ be generated by \eqref{Algo: A-AGM}, with $\gamma\in(0,2)$. Let Hypothesis \ref{Hypo: s_k} hold with $\omega=\delta=\frac{1}{2}$. Consider the sequence $(E_k)_{k\ge 0}$ defined by \eqref{E: E_k}. Then,
\begin{align*}
E_{k+1}-E_k
&\le - \frac{\mu\gamma t_{k+1} (t_{k+1}-1)  s_{k+1}}{4}\| x_{k+1}-x_k \|^2 
 - \frac{\mu \gamma t_{k+1}s_{k+1}}{2}\| x_{k+1}-x^* \|^2 \\ 
&\quad - \frac{\beta\gamma^2 t_k^2 s_k^2}{4} \| \nabla f(x_k) \|^2. 
\end{align*}
\end{lemma}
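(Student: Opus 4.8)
The plan is to specialize Lemma~\ref{Lem: E_k_diff_bound} (the master estimate) to the parameter choice $\omega=\delta=\frac12$, exactly as Lemma~\ref{Lem: E_k_diff_bound_SC} specializes it to $\delta$ arbitrary but $\omega=0$. Concretely, I would start from the conclusion of Lemma~\ref{Lem: E_k_diff_bound}, substitute $\omega=\frac12$ in the two terms that carry $\omega$ (the $\|\nabla f(x_{k+1})-\nabla f(x_k)\|^2$ term becomes $-\frac{\gamma t_{k+1}(t_{k+1}-1)s_{k+1}}{4L_{k+1}}\|\nabla f(x_{k+1})-\nabla f(x_k)\|^2$, and the $\omega\mu$ term becomes $-\frac{\mu\gamma}{4}t_{k+1}(t_{k+1}-1)s_{k+1}\|x_{k+1}-x_k\|^2$, which is already one of the three target terms), keep the $-\frac{\mu\gamma}{2}t_{k+1}s_{k+1}\|x_{k+1}-x^*\|^2$ term untouched (also a target term), and collect everything else into a remainder $R_k$ built from the $f(x_k)-f^*$ coefficient, the two gradient-norm terms with coefficients $\frac{1+\beta}{2}\gamma^2 t_{k+1}^2 s_{k+1}^2$ and $-\frac{\beta}{2}\gamma^2 t_k^2 s_k^2$, the cross term, and the halved $\|\nabla f(x_{k+1})-\nabla f(x_k)\|^2$ term. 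The target bound has the coefficient $-\frac{\beta\gamma^2 t_k^2 s_k^2}{4}\|\nabla f(x_k)\|^2$, so I would split $-\frac{\beta}{2}\gamma^2 t_k^2 s_k^2\|\nabla f(x_k)\|^2 = -\frac{\beta}{4}\gamma^2 t_k^2 s_k^2\|\nabla f(x_k)\|^2 - \frac{\beta}{4}\gamma^2 t_k^2 s_k^2\|\nabla f(x_k)\|^2$, peel off one half as a target term, and fold the other half into $R_k$; this is precisely the same bookkeeping that produced the $\delta$-split in the proof of Lemma~\ref{Lem: E_k_diff_bound_SC}, here with the role of $\delta$ played by $\frac12$.

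The crux is then to show $R_k\le 0$, and this should follow \emph{verbatim} from the argument in the proof of Lemma~\ref{Lem: E_k_diff_bound_SC}: that proof established $R_k\le 0$ for \emph{any} admissible $\omega,\delta\in[0,1)$ and is already invoked under Hypothesis~\ref{Hypo: s_k}, which permits $\omega=\delta=\frac12$ (indeed the three defining bounds $s_{k+1}\le A_k s_k$, $s_{k+1}\le B_k s_k$, $s_{k+1}L_{k+1}\le C_k$ all hold by Remark~\ref{Rem: ABC_k}). In other words, Lemma~\ref{Lem: E_k_diff_bound_SC} with $\omega=\delta=\frac12$ already gives
\begin{align*}
E_{k+1}-E_k
&\le - \frac{\mu\gamma t_{k+1}(t_{k+1}-1)s_{k+1}}{4}\|x_{k+1}-x_k\|^2
     - \frac{\mu\gamma t_{k+1}s_{k+1}}{2}\|x_{k+1}-x^*\|^2 \\
&\quad - \frac{\beta\gamma^2 t_k^2 s_k^2}{4}\|\nabla f(x_k)\|^2,
\end{align*}
which is exactly the claimed inequality. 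So at the level of the plan, the proof is essentially a one-line deduction: \emph{apply Lemma~\ref{Lem: E_k_diff_bound_SC} with $\omega=\delta=\frac12$ and simplify the numerical coefficients} ($\frac{\omega\mu\gamma}{2}=\frac{\mu\gamma}{4}$ and $\frac{\beta\delta\gamma^2}{2}=\frac{\beta\gamma^2}{4}$).

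If instead one wants a self-contained derivation rather than a pointer, I would reproduce the dichotomy from the proof of Lemma~\ref{Lem: E_k_diff_bound_SC}: in the case $\nabla f(x_{k+1})=\nabla f(x_k)$, the $B_k$-bound kills the $\|\nabla f(x_{k+1})\|^2$ coefficient and leaves $R_k\le -\frac{\beta(1-\delta)}{2}\gamma^2 t_k^2 s_k^2\|\nabla f(x_k)\|^2\le 0$; in the case $\nabla f(x_{k+1})\ne\nabla f(x_k)$, one writes $R_k = -\frac{(1-\omega)\gamma t_{k+1}(t_{k+1}-1)s_{k+1}}{2L_{k+1}}W_k$ for a quadratic form $W_k$ in $(\nabla f(x_{k+1}),\nabla f(x_k))$, bounds its discriminant $\Delta_k\le 0$ using $s_{k+1}\le B_k s_k$ and $s_{k+1}L_{k+1}\le C_k$, and concludes $W_k\ge 0$. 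The only subtlety worth flagging is purely notational: the statement of Lemma~\ref{Lem: E_k_diff_SC} assumes $L\ge\mu>0$ whereas Lemma~\ref{Lem: E_k_diff_bound_SC} was stated for general $\mu$-strongly convex $L$-smooth $f$ — but $L\ge\mu$ is automatic for any such $f$, so no estimate changes. I expect no genuine obstacle here; the "hard part" is merely resisting the temptation to re-derive the discriminant computation, since it is already done in full in the proof of Lemma~\ref{Lem: E_k_diff_bound_SC} and applies with the present parameters without modification.
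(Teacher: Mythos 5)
Your proposal is correct and matches the paper's proof, which is exactly the one-line deduction you give: set $\omega=\delta=\tfrac{1}{2}$ in Lemma~\ref{Lem: E_k_diff_bound_SC} and simplify $\frac{\omega\mu\gamma}{2}=\frac{\mu\gamma}{4}$ and $\frac{\beta\delta\gamma^2}{2}=\frac{\beta\gamma^2}{4}$. The only small slip is your opening description of Lemma~\ref{Lem: E_k_diff_bound_SC} as ``specializing to $\omega=0$''; it in fact holds for every $\omega,\delta\in[0,1)$ admitted by Hypothesis~\ref{Hypo: s_k}, which is precisely what makes the one-line substitution legitimate.
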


\begin{proof}
The result follows by setting $\omega=\frac{1}{2}$ and $\delta=\frac{1}{2}$ in Lemma \ref{Lem: E_k_diff_bound_SC}.
\end{proof}

In what follows, we give an upper bound for $(E_k)_{k\ge 0}$.

\begin{lemma}\label{Lem: E_k_bound_SC}
Let $f:H\to\R$ be $\mu$-strongly convex and $L$-smooth, where $L\ge\mu>0$. Let $(E_k)_{k\ge 0}$ be defined by \eqref{E: E_k}. Then,
\begin{align*}
E_k 
&\le \frac{(t_{k+1}-1)^2 (1+\eta+\sigma) }{2} \| x_{k+1}-x_k \|^2 
   + \frac{1}{2}\left( 1 + \frac{1}{\sigma} + \lambda \right) \| x_{k+1}-x^* \|^2 \\
&\quad + \frac{s_k^2}{2} \left[ \left( 1 + \frac{1}{\eta} + \frac{1}{\lambda} \right)(t_{k+1}-1)^2 + \beta\gamma^2 t_k^2 + \frac{\gamma t_k^2}{\mu s_k} \right]  \| \nabla f(x_k) \|^2,
\end{align*}
for every $\eta,\sigma,\lambda>0$.
\end{lemma}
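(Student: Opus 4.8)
The plan is to bound, term by term, the three summands of
$E_k = \tfrac{1}{2}\|\phi_k\|^2 + \tfrac{\beta}{2}\gamma^2 t_k^2 s_k^2 \|\nabla f(x_k)\|^2 + \gamma t_k^2 s_k(f(x_k)-f^*)$
by multiples of $\|x_{k+1}-x_k\|^2$, $\|x_{k+1}-x^*\|^2$ and $\|\nabla f(x_k)\|^2$, and then collect the coefficients.

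First I would re-express $\phi_k$ in terms of these three quantities. Starting from the representation $\phi_k = t_{k+1}(x_{k+1}-y_{k+1}) + (y_{k+1}-x^*)$ in \eqref{E: phi_k} and substituting $y_{k+1} = x_k - s_k\nabla f(x_k)$, so that $x_{k+1}-y_{k+1} = (x_{k+1}-x_k) + s_k\nabla f(x_k)$ and $y_{k+1}-x^* = (x_{k+1}-x^*) - (x_{k+1}-x_k) - s_k\nabla f(x_k)$, one obtains
\[ \phi_k = (t_{k+1}-1)(x_{k+1}-x_k) + (t_{k+1}-1)s_k\nabla f(x_k) + (x_{k+1}-x^*). \]

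Next, I would expand $\|\phi_k\|^2 = \|a\|^2+\|b\|^2+\|c\|^2+2\langle a,b\rangle+2\langle a,c\rangle+2\langle b,c\rangle$ with $a=(t_{k+1}-1)(x_{k+1}-x_k)$, $b=(t_{k+1}-1)s_k\nabla f(x_k)$ and $c=x_{k+1}-x^*$, and bound the three cross terms by Young's inequality: $2\langle a,b\rangle\le\eta\|a\|^2+\tfrac{1}{\eta}\|b\|^2$, $2\langle a,c\rangle\le\sigma\|a\|^2+\tfrac{1}{\sigma}\|c\|^2$ and $2\langle b,c\rangle\le\tfrac{1}{\lambda}\|b\|^2+\lambda\|c\|^2$. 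Since $\|a\|^2=(t_{k+1}-1)^2\|x_{k+1}-x_k\|^2$, $\|b\|^2=(t_{k+1}-1)^2 s_k^2\|\nabla f(x_k)\|^2$ and $\|c\|^2=\|x_{k+1}-x^*\|^2$, this yields
\[ \tfrac12\|\phi_k\|^2 \le \tfrac{(1+\eta+\sigma)(t_{k+1}-1)^2}{2}\|x_{k+1}-x_k\|^2 + \tfrac{(1+\tfrac{1}{\eta}+\tfrac{1}{\lambda})(t_{k+1}-1)^2 s_k^2}{2}\|\nabla f(x_k)\|^2 + \tfrac{1+\tfrac{1}{\sigma}+\lambda}{2}\|x_{k+1}-x^*\|^2. \]

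The second summand of $E_k$ already has the desired form, contributing $\tfrac{\beta\gamma^2 t_k^2 s_k^2}{2}\|\nabla f(x_k)\|^2$. For the third summand I would invoke the gradient-domination estimate valid for $\mu$-strongly convex functions, $f(x_k)-f^*\le\tfrac{1}{2\mu}\|\nabla f(x_k)\|^2$ — which follows from $f^*\ge f(x_k)+\langle\nabla f(x_k),x^*-x_k\rangle+\tfrac{\mu}{2}\|x^*-x_k\|^2$ by maximizing the right-hand side over $\|x_k-x^*\|$ — so that $\gamma t_k^2 s_k(f(x_k)-f^*)\le\tfrac{s_k^2}{2}\cdot\tfrac{\gamma t_k^2}{\mu s_k}\|\nabla f(x_k)\|^2$. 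Adding the three bounds gives the claim. There is no substantive obstacle here; the only points requiring care are pairing each Young parameter with the correct cross term so that the stated coefficients emerge exactly, and recalling the strong-convexity estimate on $f(x_k)-f^*$.
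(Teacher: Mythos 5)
Your proof is correct and follows essentially the same route as the paper: the identical decomposition $\phi_k = (t_{k+1}-1)(x_{k+1}-x_k) + (t_{k+1}-1)s_k\nabla f(x_k) + (x_{k+1}-x^*)$, the same three Young's-inequality bounds on the cross terms producing the coefficients $1+\eta+\sigma$, $1+\tfrac{1}{\eta}+\tfrac{1}{\lambda}$, $1+\tfrac{1}{\sigma}+\lambda$, and the same strong-convexity bound $f(x_k)-f^*\le\tfrac{1}{2\mu}\|\nabla f(x_k)\|^2$ for the last summand. The only difference is that you spell out the Young's inequalities explicitly, which the paper leaves implicit.
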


\begin{proof}
We begin with \eqref{E: phi_k}, to obtain
\begin{align*}
\phi_k 
&= t_{k+1}(x_{k+1}-y_{k+1}) + (y_{k+1}-x^*) \\
&= (t_{k+1}-1)(x_{k+1}-y_{k+1}) + (x_{k+1}-x^*)\\
&= (t_{k+1}-1)(x_{k+1}-x_k) + (t_{k+1}-1)s_k\nabla f(x_k) + (x_{k+1}-x^*), 
\end{align*}
so that
\begin{align*}
\frac{1}{2}\| \phi_k \|^2
&\le \frac{(t_{k+1}-1)^2 (1+\eta+\sigma) }{2} \| x_{k+1}-x_k \|^2 
     + \frac{1}{2}\left( 1 + \frac{1}{\sigma} + \lambda \right) \| x_{k+1}-x^* \|^2\\
&\quad + \frac{ (t_{k+1}-1)^2 }{2}\left( 1 + \frac{1}{\eta} + \frac{1}{\lambda} \right) s_k^2 \| \nabla f(x_k) \|^2,
\end{align*}
for every $\eta,\sigma,\lambda>0$. Since $f$ is $\mu$-strongly convex, we have
$$ f(x_k) - f^* \le \frac{\| \nabla f(x_k) \|^2}{2\mu}. $$
Using the last two inequalities in \eqref{E: E_k}, we arrive at the desired result.
\end{proof}

We are now in a position to derive the convergence results under strong convexity.

\begin{theorem}\label{Thm: func_value_SC}
Let $f:H\to\R$ be $\mu$-strongly convex and $L$-smooth, where $L\ge\mu>0$. Let $(x_k)_{k\ge 0}$ and $(y_k)_{k\ge 0}$ be generated by \eqref{Algo: A-AGM}, with $\gamma\in(0,2)$. Let Hypothesis \ref{Hypo: s_k} hold with $\omega=\delta=\frac{1}{2}$. Given $x_0=y_0$, for every $k\ge 0$, we have
$$ f(x_k) - f^* \le \frac{D L}{t_k^2}(1-\rho)^k, $$ 
where
\begin{align*}
\rho &= \min\left\{ \frac{\mu \gamma q}{4L}, \frac{\mu q}{ \frac{2L}{\beta\gamma} + \left( \frac{8}{\beta\gamma^2} + 2 \right)\mu q } \right\},\\
D &= \frac{1}{q}\left[ \tfrac{1}{2\gamma}\| x_0 - x^* \|^2 + \tfrac{s_0t_0\left( (1+\beta)\gamma s_0t_0L - 1 \right)}{2L}   \| \nabla f(x_0) \|^2 + t_0(t_0-1)s_0 ( f(x_0) - f^* ) \right]. 
\end{align*}
\end{theorem}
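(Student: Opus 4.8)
The plan is to combine the one-step contraction inequality from Lemma~\ref{Lem: E_k_diff_SC} with the upper bound on $E_k$ from Lemma~\ref{Lem: E_k_bound_SC} in order to show that $E_{k+1}\le(1-\rho)E_k$ for a suitable constant $\rho\in(0,1)$. Once this is established, the conclusion follows just as in Theorem~\ref{Thm: func_values}: from \eqref{E: E_k} we have $f(x_k)-f^*\le \frac{E_k}{\gamma t_k^2 s_k}\le\frac{(1-\rho)^k E_0}{\gamma t_k^2 s_k}$, and then the lower bound $s_k\ge \frac{q}{L}$ from Proposition~\ref{Prop: s_k_lower_bound} together with the bound on $E_0$ already computed in \eqref{E: E_0_bound} yields $f(x_k)-f^*\le\frac{DL}{t_k^2}(1-\rho)^k$ with $D$ as stated.

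\medskip

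To get the geometric decrease, I would proceed as follows. First, rewrite the right-hand side of Lemma~\ref{Lem: E_k_diff_SC} as $-\,\text{(something)}\le -\rho E_k$, where the ``something'' is a nonnegative combination of $\|x_{k+1}-x_k\|^2$, $\|x_{k+1}-x^*\|^2$ and $\|\nabla f(x_k)\|^2$. Then use Lemma~\ref{Lem: E_k_bound_SC} to bound $E_k$ from above by a (different) nonnegative combination of the same three quantities, with coefficients depending on the free parameters $\eta,\sigma,\lambda$. Matching the two term by term, it suffices to choose $\rho$ small enough that each coefficient in $\rho E_k$ is dominated by the corresponding coefficient in the dissipation term. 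Concretely, for the $\|x_{k+1}-x_k\|^2$ term one needs $\rho(t_{k+1}-1)^2(1+\eta+\sigma)\le \frac{\mu\gamma}{2}t_{k+1}(t_{k+1}-1)s_{k+1}$; using $t_{k+1}(t_{k+1}-1)\ge(t_{k+1}-1)^2$ (since $t_{k+1}\ge t_0\ge1$ and actually $t_{k+1}>1$) and $s_{k+1}\ge \frac{q}{L}$, this reduces to a condition of the form $\rho\le \frac{\mu\gamma q}{2L(1+\eta+\sigma)}$. Similarly, for the $\|x_{k+1}-x^*\|^2$ term one needs $\rho(1+\frac1\sigma+\lambda)\le \frac{\mu\gamma}{2}t_{k+1}s_{k+1}$, which (since $t_{k+1}\ge1$ and $s_{k+1}\ge\frac{q}{L}$) reduces to $\rho\le \frac{\mu\gamma q}{2L(1+\frac1\sigma+\lambda)}$; and for the $\|\nabla f(x_k)\|^2$ term one needs $\rho$ times the bracket $\bigl[(1+\frac1\eta+\frac1\lambda)(t_{k+1}-1)^2 + \beta\gamma^2 t_k^2 + \frac{\gamma t_k^2}{\mu s_k}\bigr]s_k^2$ to be at most $\frac{\beta\gamma^2}{2}t_k^2 s_k^2$. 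Here I would exploit $t_{k+1}^2 = t_k^2 + mt_{k+1}$ (hence $(t_{k+1}-1)^2\le t_{k+1}^2$ is a bit too crude; instead use $(t_{k+1}-1)^2 < t_{k+1}^2 = t_k^2 + mt_{k+1} \le t_k^2 + t_{k+1}$, but more usefully $(t_{k+1}-1)^2\le t_k^2$ would follow from $t_{k+1}-1\le t_k$, which holds since $t_{k+1}\le t_k + m \le t_k+1$); and $s_k\ge\frac{q}{L}$ so $\frac{\gamma t_k^2}{\mu s_k}\le\frac{\gamma t_k^2 L}{\mu q}$, and divide through by $t_k^2 s_k^2$ to obtain a condition $\rho\le \frac{\beta\gamma^2/2}{(1+\frac1\eta+\frac1\lambda) + \beta\gamma^2 + \frac{\gamma L}{\mu q}}$ that is uniform in $k$. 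Finally, I would make the specific choice $\eta=\sigma=\lambda=1$ (or whatever values reproduce the stated $\rho$), collect the three resulting upper bounds on $\rho$, and read off that $\rho=\min\Bigl\{\frac{\mu\gamma q}{4L},\ \frac{\mu q}{\frac{2L}{\beta\gamma}+(\frac{8}{\beta\gamma^2}+2)\mu q}\Bigr\}$ works; note the first term is the binding constraint from the $\|x_{k+1}-x^*\|^2$ (and $\|x_{k+1}-x_k\|^2$) terms with the chosen parameters, while the second, after clearing denominators, is exactly the gradient-term constraint.

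\medskip

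The main obstacle I anticipate is the bookkeeping in the gradient term: the coefficient in $E_k$ mixes $(t_{k+1}-1)^2$ and $t_k^2$, and the inductive relation $t_{k+1}^2=t_k^2+mt_{k+1}$ with $m\in(0,1)$ must be used carefully to bound everything uniformly in $k$ by a multiple of $t_k^2 s_k^2$. One has to be slightly careful that $t_{k+1}-1\le t_k$, which is where $m\le 1$ enters, and that the crude substitution $s_k\ge\frac{q}{L}$ in the $\frac{\gamma t_k^2}{\mu s_k}$ term does not destroy the estimate — it is fine because that term is being upper-bounded, not lower-bounded. A secondary subtlety is that $E_k$ could in principle be zero (e.g. if $x_k=x^*$ and $\nabla f(x_k)=0$), in which case the contraction is trivially an equality $0\le 0$ and there is nothing to prove; the argument should simply note that $E_k\ge 0$ always, so dividing is never an issue because we only ever assert $E_{k+1}\le(1-\rho)E_k$, never the reverse. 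Apart from this, the proof is a direct chaining of the two preceding lemmas plus the already-established lower bound on $s_k$ and upper bound on $E_0$, so no genuinely new idea is required.
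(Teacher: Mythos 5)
Your proposal follows the paper's argument exactly: combine Lemmas~\ref{Lem: E_k_diff_SC} and~\ref{Lem: E_k_bound_SC} to get $E_{k+1}\le(1-\rho)E_k$ by matching each dissipation coefficient against the corresponding coefficient in the upper bound on $E_k$, then use $s_k\ge q/L$ to make $\rho$ uniform in $k$, and conclude via $f(x_k)-f^*\le E_k/(\gamma t_k^2 s_k)$ and the bound on $E_0$ from~\eqref{E: E_0_bound}. Two small corrections to your bookkeeping: the parameter choice that reproduces the stated $\rho$ is $\eta=\sigma=\tfrac12$, $\lambda=1$ (not $\eta=\sigma=\lambda=1$), and since the coefficient of $\|x_{k+1}-x^*\|^2$ in the $E_k$ upper bound is $\tfrac12(1+\tfrac1\sigma+\lambda)$, the matching condition there should read $\rho(1+\tfrac1\sigma+\lambda)\le\mu\gamma t_{k+1}s_{k+1}$ (your version is a factor of $2$ too strict and would yield $\tfrac{\mu\gamma q}{8L}$ instead of $\tfrac{\mu\gamma q}{4L}$).
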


\begin{proof}
Using Lemmas \ref{Lem: E_k_diff_SC} and \ref{Lem: E_k_bound_SC}, we obtain
$$ E_{k+1}-E_k \le -r E_k, $$
where
$$ r =\max_{\eta,\sigma,\lambda>0}\min\left\{ \frac{\mu\gamma s_{k+1} t_{k+1} }{2(t_{k+1}-1) (1+\eta+\sigma)}, 
\frac{\mu \gamma s_{k+1} t_{k+1}}{1 + \frac{1}{\sigma} + \lambda },
\frac{\mu s_k}{ 2 \left( \frac{1}{\beta\gamma} +  \varphi_k \mu s_k \right) }
\right\}, $$
with
$$ \varphi_k = \frac{ 1 + \frac{1}{\eta} + \frac{1}{\lambda} }{ \beta\gamma^2 } \left( \frac{t_{k+1}-1}{t_k} \right)^2 + 1 
\le \frac{ 1 + \frac{1}{\eta} + \frac{1}{\lambda} }{ \beta\gamma^2 } + 1. $$
Setting $\eta=\sigma=\frac{1}{2}$ and $\lambda=1$, we obtain
$$ r\ge \min\left\{ \frac{\mu \gamma s_{k+1}}{4}, \frac{\mu s_k}{ \frac{2}{\beta\gamma} + \left( \frac{8}{\beta\gamma^2} + 2 \right)\mu s_k } \right\}. $$
By Proposition \ref{Prop: s_k_lower_bound}, we have $s_k \ge \frac{q}{L}$ for all $k\ge 0$, so that
$$ r\ge \min\left\{ \frac{\mu \gamma q}{4L}, \frac{\mu q}{ \frac{2L}{\beta\gamma} + \left( \frac{8}{\beta\gamma^2} + 2 \right)\mu q } \right\}:=\rho. $$
It follows that
$$ E_{k+1} \le (1-\rho)E_k, $$
so that
$$ E_k \le (1-\rho)^k E_0,\quad \forall k\ge 0. $$
As a result,
$$ f(x_k) - f^* \le \frac{E_k}{\gamma t_k^2 s_k} \le \frac{E_0}{\gamma t_k^2 s_k}(1-\rho)^k. $$
Given $x_0=y_0$, \eqref{E: E_0_bound} yields
$$ E_0 \le \frac{1}{2}\| x_0 - x^* \|^2 + \frac{1}{2}\gamma s_0t_0 \left[ (1+\beta)\gamma s_0t_0 - \frac{1}{L} \right] \| \nabla f(x_0) \|^2 + \gamma t_0(t_0-1)s_0 ( f(x_0) - f^* ). $$
Recalling that $s_k \ge \frac{q}{L}$, we obtain
$$ f(x_k) - f^* \le \frac{E_0 L}{\gamma q t_k^2}(1-\rho)^k \le \frac{DL}{t_k^2}(1-\rho)^k, $$
as claimed.
\end{proof}

\begin{remark}
In case $f$ is only convex, we still have
$$ f(x_k) - f^* \le \frac{DL}{t_k^2} \le \mathcal{O}\left( \frac{1}{k^2} \right). $$
Also, in view of Lemma \ref{Lem: E_k_diff_SC}, we have
$$ \frac{\beta\gamma^2 t_k^2 s_k^2}{4} \| \nabla f(x_k) \|^2 \le E_k - E_{k+1},\quad \forall k\ge 0,$$
which implies
$$ \sum_{k=0}^\infty k^2 \| \nabla f(x_k) \|^2 < \infty. $$
\end{remark}

We now examine two particular cases of Theorem \ref{Thm: func_value_SC}, to illustrate concrete convergence rates:

{\bf The case $\gamma=\frac{1}{2}$, $\beta=1$ and $t_0=2$.} Theorem \ref{Thm: func_value_SC} gives:

\begin{corollary}
Let $f:H\to\R$ be $\mu$-strongly convex and $L$-smooth, where $L\ge\mu>0$. Let the sequence $(t_k)_{k\ge 0}$ be given by \eqref{E: t_k} with $m\in(0,1)$ and $t_0= 2$. Consider the algorithm \eqref{Algo: A-AGM}, where $\gamma =\frac{1}{2}$ and Hypothesis \ref{Hypo: s_k} holds with $\omega=\delta=\frac{1}{2}$ and $\beta=1$. Then, given $s_0 \ge \frac{1}{12L}$ and $x_0=y_0$, we have, for every $k\ge 0$,
$$ s_k \ge \frac{1}{12L} \qbox{and} f(x_k) - f^* \le \frac{12DL}{t_k^2}(1-\rho)^k, $$
where
\begin{align*}
\rho &= \min\left\{ \frac{\mu}{96L}, \frac{\mu}{ 48L+34\mu } \right\},\\
D &= \| x_0 - x^* \|^2 + s_0 \left( 2 s_0 - \tfrac{1}{L} \right) \| \nabla f(x_0) \|^2 + 2s_0 ( f(x_0) - f^* ). 
\end{align*}
\end{corollary}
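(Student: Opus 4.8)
The plan is to obtain this corollary as a direct specialization of Theorem~\ref{Thm: func_value_SC} to the parameters $\gamma=\tfrac12$, $\beta=1$, $t_0=2$, $\omega=\delta=\tfrac12$, so the proof reduces to checking admissibility of these choices and then evaluating the constants $q$, $\rho$ and $D$.

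First I would verify that Hypothesis~\ref{Hypo: s_k} is compatible with the stated choices. The range conditions ($m\in(0,1)$, $\omega,\delta\in[0,1)$, $\beta>0$, $\gamma\in(0,2)$, $t_0\ge1$) are immediate, so the only point to check is \eqref{E: B>1}: here $\frac{2}{(1+\beta)\gamma}\left(1-\frac{1}{t_0}\right)=\frac{2}{(2)(1/2)}\cdot\frac12=1\ge1$, so \eqref{E: B>1} holds (with equality), and Remark~\ref{Rem: ABC_k} applies. Next I would compute $q$ from \eqref{E: kappa}: the numerator is $1-\omega=\frac12$ and the denominator is $\frac{(1+\beta)\gamma t_0}{t_0-1}+\frac{1}{\beta\gamma(1-\delta)}=\frac{2}{1}+\frac{1}{1/4}=2+4=6$, hence $q=\frac{1}{12}$. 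Thus the assumption $s_0\ge\frac{1}{12L}$ is precisely $s_0\ge\frac{q}{L}$, and Proposition~\ref{Prop: s_k_lower_bound} gives $s_k\ge\frac{1}{12L}$ for every $k\ge0$, which is the first assertion.

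With Hypothesis~\ref{Hypo: s_k} now in force, Theorem~\ref{Thm: func_value_SC} yields $f(x_k)-f^*\le\frac{DL}{t_k^2}(1-\rho)^k$ with the $\rho$ and $D$ defined there, and it remains only to simplify them. For $\rho$: the first term equals $\frac{\mu\gamma q}{4L}=\frac{\mu}{96L}$; for the second, $\frac{2L}{\beta\gamma}=4L$ and $\frac{8}{\beta\gamma^2}+2=32+2=34$, so $\frac{\mu q}{\frac{2L}{\beta\gamma}+\left(\frac{8}{\beta\gamma^2}+2\right)\mu q}=\frac{\mu/12}{4L+34\mu/12}=\frac{\mu}{48L+34\mu}$, reproducing the stated $\rho$. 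For $D$: using $\frac{1}{2\gamma}=1$, $s_0t_0=2s_0$ (so $(1+\beta)\gamma s_0t_0L-1=2s_0L-1$ and the gradient coefficient $\frac{s_0t_0\left((1+\beta)\gamma s_0t_0L-1\right)}{2L}$ becomes $\frac{2s_0(2s_0L-1)}{2L}=s_0\left(2s_0-\frac1L\right)$), and $t_0(t_0-1)s_0=2s_0$, the bracketed expression defining $D$ in Theorem~\ref{Thm: func_value_SC} equals $\|x_0-x^*\|^2+s_0\left(2s_0-\frac1L\right)\|\nabla f(x_0)\|^2+2s_0(f(x_0)-f^*)$. Multiplying by the prefactor $\frac1q=12$ turns $\frac{DL}{t_k^2}(1-\rho)^k$ into $\frac{12DL}{t_k^2}(1-\rho)^k$ with $D$ as displayed in the corollary, which completes the argument.

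There is no substantive obstacle here: the statement is a specialization of Theorem~\ref{Thm: func_value_SC}, and the only care required is checking that \eqref{E: B>1} holds at the boundary value $1$ and correctly tracking the factor $\frac1q=12$ absorbed into the constant when passing from the bound of Theorem~\ref{Thm: func_value_SC} to the $D$ stated in the corollary.
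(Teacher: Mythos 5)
Your proof is correct and takes the same approach the paper implicitly uses: the corollary is a direct specialization of Theorem~\ref{Thm: func_value_SC}, and all your arithmetic ($q=\tfrac{1}{12}$, the two branches of $\rho$, the simplification of the bracket defining $D$, and the absorption of $\tfrac{1}{q}=12$ into the prefactor) checks out, including the observation that \eqref{E: B>1} holds with equality and that the other range conditions in Hypothesis~\ref{Hypo: s_k} are trivially satisfied.
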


{\bf The case $\gamma=1$, $\beta=\frac{1}{3}$ and $t_0=3$.} Now Theorem \ref{Thm: func_value_SC} becomes:

\begin{corollary}
Let $f:H\to\R$ be $\mu$-strongly convex and $L$-smooth, where $L\ge\mu>0$. Let the sequence $(t_k)_{k\ge 0}$ be given by \eqref{E: t_k} with $m\in(0,1)$ and $t_0= 3$. Consider the algorithm \eqref{Algo: A-AGM}, where $\gamma = 1$ and Hypothesis \ref{Hypo: s_k} holds with $\omega=\delta=\frac{1}{2}$ and $\beta=\frac{1}{3}$. Then, given $s_0 \ge \frac{1}{16L}$ and $x_0=y_0$, we have, for every $k\ge 0$,
$$ s_k \ge \frac{1}{16L},\qbox{and} f(x_k) - f^* \le \frac{8DL}{t_k^2}(1-\rho)^k, $$
where
\begin{align*}
\rho &= \min\left\{ \frac{\mu}{64L}, \frac{\mu}{ 96L + 26\mu } \right\},\\
D &= \| x_0 - x^* \|^2 + 3s_0 \left( 4 s_0 - \tfrac{1}{L} \right) \| \nabla f(x_0) \|^2 + 12 s_0 ( f(x_0) - f^* ). 
\end{align*}
\end{corollary}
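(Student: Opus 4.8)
The plan is to obtain this corollary as a direct specialization of Theorem~\ref{Thm: func_value_SC}: verify that the stated parameter choices satisfy Hypothesis~\ref{Hypo: s_k} and the hypotheses of that theorem, and then substitute $\gamma=1$, $\beta=\tfrac13$, $t_0=3$, $\omega=\delta=\tfrac12$ into the constants $q$, $\rho$ and $D$ and simplify.

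First I would check the standing assumptions. We have $\gamma=1\in(0,2)$, $\omega=\delta=\tfrac12\in[0,1)$, $\beta=\tfrac13>0$, $t_0=3\ge1$ and $m\in(0,1)$, so the only point requiring attention is \eqref{E: B>1}. Here $\frac{2}{(1+\beta)\gamma}\bigl(1-\frac1{t_0}\bigr)=\frac{2}{4/3}\cdot\frac23=1\ge1$, so \eqref{E: B>1} holds (with equality); since $t_{k+1}>t_0$ for every $k\ge0$, the strict inequality $B_k>1$ of Remark~\ref{Rem: ABC_k} still follows. Thus Hypothesis~\ref{Hypo: s_k} is in force provided $s_0\ge q/L$, which matches the stated lower bound on $s_0$ once $q$ is computed.

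Next I would evaluate $q$ from \eqref{E: kappa}. With these parameters, $\frac{(1+\beta)\gamma t_0}{t_0-1}=\frac{(4/3)\cdot3}{2}=2$ and $\frac{1}{\beta\gamma(1-\delta)}=\frac{1}{(1/3)(1/2)}=6$, so $q=\frac{1-\omega}{2+6}=\frac{1/2}{8}=\frac1{16}$. Proposition~\ref{Prop: s_k_lower_bound} then gives $s_k\ge q/L=\frac1{16L}$ for every $k\ge0$, which is the first assertion. For $\rho$, from the formula in Theorem~\ref{Thm: func_value_SC}: $\frac{\mu\gamma q}{4L}=\frac{\mu}{64L}$, while $\frac{2L}{\beta\gamma}=6L$ and $\frac{8}{\beta\gamma^2}+2=24+2=26$, whence $\frac{\mu q}{\frac{2L}{\beta\gamma}+(\frac{8}{\beta\gamma^2}+2)\mu q}=\frac{\mu/16}{6L+26\mu/16}=\frac{\mu}{96L+26\mu}$; taking the minimum yields the stated $\rho$.

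Finally I would rewrite $D$. Substituting $q=\frac1{16}$, $\gamma=1$, $\beta=\frac13$, $t_0=3$ into the expression for $D$ in Theorem~\ref{Thm: func_value_SC} gives $\frac1q\bigl[\frac12\|x_0-x^*\|^2+\frac{3s_0}{2}(4s_0-\frac1L)\|\nabla f(x_0)\|^2+6s_0(f(x_0)-f^*)\bigr]=8\|x_0-x^*\|^2+24s_0(4s_0-\frac1L)\|\nabla f(x_0)\|^2+96s_0(f(x_0)-f^*)$, which is exactly $8$ times the quantity $D$ displayed in the corollary; hence $\frac{D_{\mathrm{Thm}}L}{t_k^2}(1-\rho)^k=\frac{8DL}{t_k^2}(1-\rho)^k$, which is the second assertion. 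The argument is entirely bookkeeping; the only subtlety worth flagging is that \eqref{E: B>1} is met with equality, so one should confirm that $B_k>1$ (not merely $\ge 1$) still holds for all $k\ge0$, which it does precisely because $t_{k+1}>t_0$.
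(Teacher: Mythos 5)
Your proposal is correct and matches the paper's approach exactly: the corollary is a direct specialization of Theorem~\ref{Thm: func_value_SC}, and your substitutions $q=\tfrac{1}{16}$, $\rho=\min\{\tfrac{\mu}{64L},\tfrac{\mu}{96L+26\mu}\}$, and $D_{\mathrm{Thm}}=8D$ all check out. The remark about equality in \eqref{E: B>1} still giving $B_k>1$ (via $t_{k+1}>t_0$, since $(t_k)$ is strictly increasing) is a correct and worthwhile point of care.
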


\section{Conclusions}\label{Sec: conclusions}
We have developed an adaptive accelerated gradient method for solving smooth convex optimization problems. The method is free of line search procedures. It provides a convergence guarantee of the iterates and ensures a fast convergence rate $\mathcal{O}\left( \frac{L}{k^2} \right)$ for the function values when the objective function $f$ is convex and $L$-smooth, and a linear convergence rate $\mathcal{O}\left( \frac{L}{k^2}(1-\rho)^k \right)$, with $\rho=\mathcal{O}\left( \frac{\mu}{L} \right)$, in case $f$ is $\mu$-strongly convex.


\bibliographystyle{siamplain}
\bibliography{myrefs}

\end{document}